\title{Rainbow Combinatorial Lines in Hypercubes}
\author{Michael Zheng\footnote{This research was supported by NSF DMS grant no. 234968.}}
\newtheorem{theorem}{Theorem}[section]
\newtheorem{corollary}[theorem]{Corollary}
\newtheorem{lemma}[theorem]{Lemma}
\newtheorem*{corollary*}{Corollary}
\newtheorem*{observation}{Observation}
\newtheorem{definition}{Definition}[section]
\begin{document}
\maketitle

\pagenumbering{arabic}
\begin{abstract}
This paper is about the rainbow dual of the Hales Jewett number, providing general bounds an anti-Hales Jewett Number for hypercubes of length k and dimension n denoted $ah(k, n).$ The best general bounds this paper provides are: $(k-1)^n < ah(k, n) \leq \frac{(k-1)^2-2}{k-1}\cdot k^{n-1}+\frac{k+1}{k-1}.$ This paper also includes proofs about the specific cases of $k = 2$ and $k = 3$, where we show that $ah(2, n) = 2$ and $2^n < ah(3, n) \leq 3^{n-1} - 2\cdot3^{n-4} + 2$ for all natural numbers n $>$ 4. For $n < 4$, we have found the exact values: $ah(3, 1) = 3$, $ah(3, 2) = 5$, and $ah(3, 3) = 11$. In the case $n = 4$, we have found that $23 < ah(3, 4) \leq 27$.
\end{abstract}
Keywords and phrases: Hales-Jewett number, rainbow colorings, hypercubes, combinatorial lines.


\section{Introduction} 
In 1963, A. W. Hales and R. I. Jewett published the eponymous "Hales-Jewett Theorem" which has since become a hallmark of Ramsey Theory \cite{naslund_hales-jewett_2013}. As the study of Ramsey Theory progressed, mathematicians like P. Erd\"{o}s, M. Simonovits, and V. T. S\'{o}s questioned whether there were rainbow duals of Ramsey Theory, which began the branch of Anti-Ramsey Theory
. Since their paper in 1975, mathematicians have made many discoveries in Anti-Ramsey Theory; for example, Young et al found a formula for anti-van der Waerden numbers (the rainbow analogue of van der Waerden numbers) of three term arithmetic progressions in \cite{berikkyzy_anti-van_2017}. Though classical theorems like the van der Waerden Theorem have been studied through a rainbow lens, not much work has been published about a potential anti-Hales Jewett Theorem. This paper will introduce the anti-Hales Jewett Number as a potential area of research in anti-Ramsey Theory. Analogous to the anti-Ramsey number, the anti-Hales Jewett Number will be the minimum number of colors in colorings of the vertices of a hypercube required to guarantee a rainbow combinatorial line. We will begin with some definitions and state the original Hales-Jewett Theorem, with k, n \(\in \mathbb{N}\). Note that an r-coloring of \([k]^n\) is a coloring of all the vertices in \([k]^n\) with r colors.
\begin{definition}[Hypercube Notation]
We will use $[k]$ to be the set $\{i \in \mathbb{N}: 1 \leq i \leq k\}$ and $[k]^n$ to be the hypercube of length k and dimension n. In other words, $[k]^n$ is a graph with $k^n$ vertices v, labeled as an n-dimensional vector $v = (v_1,v_2,...,v_n)$ for $v_i \in [k]$.
\end{definition}
\begin{definition}[Combinatorial Line]
Let w be a word of length n with letters \(\in [k]\cup\{*\}\), where at least one of the letters is a \(*\). Then, a combinatorial line formed by w contains the words \(w_1, w_2, ..., w_k\) for i \(\in [k]\) where each \(w_i\) is equivalent to w, except each \(*\) is replaced by i. We will also use these words to denote vertices of the hypercube. We will call a combinatorial line "rainbow" with respect to a coloring of $[k]^n$ if no color appears more than once on $w_1,..., w_k$. Whenever we refer to a "line" in this paper, assume we refer to a  combinatorial line unless stated otherwise.
\end{definition}
\begin{definition}[Rainbow Free]
We will call a r-coloring of \([k]^n\) "rainbow free" if there are no rainbow combinatorial lines.
\end{definition}
\begin{definition}[anti-Hales Jewett Number]
The anti-Hales Jewett Number for the hypercube \([k]^n\), denoted ah(k, n), is the minimum number of colors to guarantee that any ah(k, n)-coloring of \([k]^n\) contains a rainbow combinatorial line.
\end{definition}
\begin{definition}[Minimal Coloring]
A minimal coloring of $[k]^n$ occurs when all colors but one appear only once. We will call the color that appears multiple times the "dominant" color.
\end{definition}
\begin{theorem}[Hales-Jewett (1963)] \footnote{This text is paraphrased from \cite{naslund_hales-jewett_2013}}
For all values k, r $\in \mathbb{N}$, there exists a number HJ(k, n) such that if N $\geq$ HJ(k, n) and the points of $[k]^N$ are colored with r colors, then $[k]^N$ contains atleast one monochromatic combinatorial line.
\end{theorem}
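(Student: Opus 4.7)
The plan is to prove the Hales--Jewett theorem by induction on the alphabet size $k$, taking as the inductive statement that $HJ(k,r)$ exists for every number of colors $r$. The base case $k=1$ is immediate: the cube $[1]^N$ has only one point, and any combinatorial line collapses to that single point, which is trivially monochromatic. The substance of the argument lies in the inductive step, carried out by the classical \emph{focusing} technique.

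Assume $HJ(k-1,s)$ exists for every $s$. Given an $r$-coloring of $[k]^N$ with $N$ sufficiently large, the goal is to produce a collection of \emph{focused lines}, i.e.\ an $m$-tuple of combinatorial lines $L_1,\ldots,L_m$ in $[k]^N$ with pairwise disjoint wildcard sets $W_1,\ldots,W_m$, all sharing the common endpoint $v$ obtained by setting every wildcard to the value $k$. I would build this $m$-tuple recursively, one line at a time. At each stage I partition the remaining free coordinates into a long block, and view that block as encoding an alphabet over $[k-1]$ (disallowing the value $k$); the already-built lines induce a product coloring on this block, and the inductive hypothesis applied with the appropriate (exponentially larger) palette produces a combinatorial line over $[k-1]$ that lifts back to a combinatorial line over $[k]$ focused at $v$.

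Once $m$ is chosen larger than $r$, pigeonhole delivers two indices $i\neq j$ such that $L_i$ and $L_j$ carry the same color on their $(k-1)$-th (say) point. Merging the wildcard sets into $W_i\cup W_j$ and sweeping them simultaneously through $[k]$ yields a single combinatorial line whose first $k-1$ substitutions all realize this common color while its $k$-th substitution is the focus $v$; one additional layer of focusing, or a final pigeonhole step on $v$, pins down the color at $v$ and produces the desired monochromatic combinatorial line over the full alphabet $[k]$.

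The main obstacle is the bookkeeping. One must track the tower of alphabet sizes of the form $r^{r^{\cdot^{\cdot^{\cdot}}}}$ that is repeatedly fed back into the inductive hypothesis, maintain disjointness of the wildcard sets as each new focused line is carved out, and align the \emph{background} letters outside $W_i\cup W_j$ so that the focusing identity holds on the nose rather than approximately. These simultaneous constraints are what force the bound on $HJ(k,r)$ out of the primitive recursive hierarchy in the original Hales--Jewett argument; if an explicit numerical estimate were needed, I would switch to Shelah's cube lemma reorganization, which avoids the nested induction and delivers a primitive-recursive bound better suited to the quantitative hypercube questions that motivate the rest of this paper.
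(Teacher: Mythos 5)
The paper never proves this statement---it is the classical Hales--Jewett theorem, quoted with a citation---so the only question is whether your sketch stands on its own. Its overall shape does: induction on the alphabet size $k$, with $HJ(k-1,s)$ assumed for all $s$, building a family of lines focused at a common point $v$ and finishing with pigeonhole over more than $r$ objects is exactly the classical focusing proof, and the base case and the ``product coloring with an exponentially larger palette'' device are the right ingredients.

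The gap is in the pigeonhole-and-merge step. If $L_i$ and $L_j$ have \emph{disjoint} wildcard sets $W_i, W_j$ and are both focused at $v$ (all wildcards set to $k$), then knowing they carry the same color---even on all of their first $k-1$ points---tells you nothing about the merged line: for $a\in[k-1]$ the point with $W_i=W_j=a$ and every other wildcard at $k$ lies on neither $L_i$ nor $L_j$, so its color is uncontrolled, and the claim that ``the first $k-1$ substitutions all realize this common color'' does not follow. The classical argument closes this in one of two ways, and you need to commit to one. Either (a) the focused lines are built with \emph{nested} (not disjoint) wildcard sets, each line monochromatic on its first $k-1$ points with pairwise \emph{distinct} colors, and with $m=r$ lines the final pigeonhole is between the color of the focus $v$ and the $r$ line colors---no merging of wildcard sets occurs at all; or (b) you strengthen the invariant carried through the construction: applying $HJ(k-1,\cdot)$ to the product coloring over \emph{all} continuations (not just over the points of the already-built lines) yields disjoint blocks $V_1,\dots,V_m$ such that changing the common value of any one block within $[k-1]$ never changes the color of a point; then the color of a subspace point depends only on which blocks equal $k$, pigeonhole among the $r+1$ points ``first $j$ blocks at $1$, remaining blocks at $k$'' gives two with equal color, and sweeping the blocks strictly between those two indices through $[k]$ produces the monochromatic line. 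Your present wording sits between these two arguments and proves neither; likewise the closing phrase ``one additional layer of focusing, or a final pigeonhole step on $v$, pins down the color at $v$'' is a placeholder, since pinning down the color at $v$ is precisely where the distinct-colors bookkeeping of version (a) is needed. (Also note the paper's statement has a typo---$HJ(k,n)$ should be $HJ(k,r)$---which your proof correctly treats as a function of $k$ and $r$.)
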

In traditional Ramsey Theory, the Hales-Jewett and van der Waerden numbers are intricately connected; the Hales Jewett theorem is essentially an abstraction of van der Waerden's theorem \cite{naslund_hales-jewett_2013}\cite{lee_ramsey_2009}. Paralleling their monochromatic counterparts, the anti-Hales Jewett numbers seem to be linked to anti-van der Waerden numbers. The bounds we find in this paper look very similar to the anti-van der Waerden numbers of arithmetic progressions, specifically the connections between 3 term arithmetic progressions and hypercubes with side length 3. I believe these two numbers naturally connect since there is always an isomorphism from a hypercube $[3]^n$ to $[3^n]$ that lets combinatorial lines be written as 3 term arithmetic progressions. However, the Anti-van der Waerden number will always be larger since combinatorial lines are more restrictive than arithmetic progressions. For example, we can consider the map $f: [3]^2 \to [9]$ to be defined by $f(xy) = 3\cdot(x-1) + y$, where xy is a word in $[3]^2$. We then notice that the arithmetic progression ${3, 5, 7}$ in $[9]$ corresponds to the words $13, 22,$ and $31$, which is not a combinatorial line. I conjecture that for geometric lines in hypercubes, an analogue of the anti-Hales Jewett number would be equal to the anti-van der Waerden numbers, which is another reason why I only consider combinatorial lines in this paper. 
\section{Bounds for the anti-Hales Jewett Number}
There are two types of bounds for the Anti-Hales Jewett Number, the recursive bounds and the non-recursive bounds. As the numbers for higher and higher dimensions are found, the recursive upper bound becomes better and better. However, since we don't know many exact values of ah(k,n), the non-recursive bounds provide an informative estimate. 
\begin{theorem} 
Let k, n \(\in\) \(\mathbb{N}\), with k $> 2$ and n $> 1$. \(Then, (k-2)\cdot ah(k, n - 1) - k + 3 < ah(k, n) \leq k\cdot ah(k , n-1) - k - 1.\) 
\end{theorem}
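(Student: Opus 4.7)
The plan is to prove the two bounds separately, both via slicing $[k]^n$ along the last coordinate into $k$ parallel copies of $[k]^{n-1}$, which I will call slices $1,\ldots,k$. Write $m = ah(k,n-1)$, and note $m \geq k \geq 3$, since any $r$-coloring with $r < k$ is trivially rainbow-free. Every combinatorial line of $[k]^n$ either lies entirely in one slice (and is a combinatorial line of $[k]^{n-1}$) or uses the last coordinate as a starred position, in which case it intersects each slice in exactly one point; I will refer to the latter type as a \emph{diagonal} line, of the form $(v,1),(v,2),\ldots,(v,k)$ for some $v \in [k]^{n-1}$.

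For the lower bound I would exhibit a rainbow-free coloring of $[k]^n$ with exactly $(k-2)(m-1)+1 = (k-2)\,ah(k,n-1)-k+3$ colors, which yields the strict inequality. The construction places a rainbow-free coloring of $[k]^{n-1}$ on each of the first $k-2$ slices, using $m-1$ colors each from pairwise disjoint palettes (such colorings exist by the definition of $m$), and colors every vertex of the last two slices with a single new color $c_0$. Lines inside one of the first $k-2$ slices are rainbow-free by construction; lines inside slice $k-1$ or $k$ are monochromatic; and every diagonal line has both its $(k-1)$st and $k$th points colored $c_0$, hence repeats.

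For the upper bound, fix a rainbow-free coloring with $N$ colors and let $c_i$ be the number of colors appearing on slice $i$. Since each slice is itself rainbow-free, $c_i \leq m-1$, and since each color appears in at least one slice, $N \leq \sum_i c_i \leq k(m-1) = km-k$. The real work is to improve this estimate by two, which I will do by contradiction: assume $N \geq km-k-1$. If $\sum_i c_i = N$, then every color is confined to one slice, so every diagonal $(v,1),\ldots,(v,k)$ picks up $k$ distinct colors and is rainbow, a contradiction. Otherwise $\sum_i c_i \geq N+1 \geq km-k$, which forces $\sum_i c_i = km-k$, $N = km-k-1$, every $c_i = m-1$, and exactly one color $c^*$ appearing in exactly two slices (say $a$ and $b$), with all other colors in just one slice. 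Every diagonal still needs a color repeat, but $c^*$ is the only color that lives in more than one slice, so $(v,a)$ and $(v,b)$ must both be colored $c^*$ for every $v \in [k]^{n-1}$. As $v$ ranges, this makes slice $a$ monochromatic in $c^*$, giving $c_a = 1 < m-1$, the final contradiction.

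The hardest step is this last case-split: ruling out $N = km-k-1$ requires simultaneously tracking the arithmetic of the slice-wise color counts and the forcing behavior of diagonal lines. The monochromatic-slice contradiction depends crucially on $m-1 \geq 2$, so the argument silently uses the trivial bound $ah(k,n-1) \geq k \geq 3$ that holds because any line of $k$ points needs $k$ distinct colors to be rainbow.
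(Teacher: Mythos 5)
Your proof is correct and takes essentially the same approach as the paper: the same decomposition into $k$ parallel copies of $[k]^{n-1}$, the same lower-bound construction (disjoint rainbow-free palettes on $k-2$ slices, one new color on the last two), and the same two-case color-counting argument for the upper bound, with your shared-color case matching the paper's (your use of $ah(k,n-1)\geq k\geq 3$ is in fact a cleaner statement of the paper's final contradiction).
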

\begin{theorem}
Let k, n \(\in\) \(\mathbb{N}\), with k, n $> 2$. \(Then, (k-1)^n < ah(k, n) \leq \frac{(k-1)^2-2}{k-1}\cdot k^{n-1}+\frac{k+1}{k-1}.\) 
\end{theorem}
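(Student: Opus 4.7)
The plan is to prove both bounds by leveraging Theorem 2.1 as the backbone, combined with an explicit rainbow-free coloring for the lower bound. Iterating only the lower recursion of Theorem 2.1 yields growth of order $(k-2)^{n-1}$, which is strictly weaker than the desired $(k-1)^n$ (the induction step would only produce $(k-2)(k-1)^{n-1}+1$ from the hypothesis $ah(k,n-1) > (k-1)^{n-1}$), so a direct construction is unavoidable on the lower side.

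For $(k-1)^n < ah(k,n)$, I would define a coloring $c: [k]^n \to [k-1]^n$ coordinate-wise by setting $c(v)_i = v_i$ whenever $v_i < k$ and $c(v)_i = 1$ whenever $v_i = k$. Restricted to $[k-1]^n$ the map $c$ is the identity, so exactly $(k-1)^n$ colors appear in the image. To verify rainbow freeness, I would check that for any combinatorial line with root word $w$ one has $c(w_1) = c(w_k)$: on every non-$*$ coordinate of $w$ the two endpoints agree by construction, while on every $*$ coordinate $w_1$ has entry $1$ and $w_k$ has entry $k$, both of which are sent to $1$ by $c$. Hence every line contains a repeated color and the coloring is rainbow free.

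For the upper bound, I would induct on $n$ using the upper recursion $ah(k,n) \leq k \cdot ah(k,n-1) - k - 1$ from Theorem 2.1. The base case $n = 1$ uses the elementary fact $ah(k,1) = k$ (a single line consisting of all $k$ points, so rainbow iff all colors are distinct), and one checks by direct computation that $\frac{(k-1)^2 - 2}{k-1} + \frac{k+1}{k-1} = \frac{k^2-k}{k-1} = k$, so the formula matches at $n=1$. The inductive step is an arithmetic manipulation: the $k^{n-1}$ coefficient $\frac{(k-1)^2 - 2}{k-1}$ appears automatically upon multiplying the hypothesis by $k$, while the constant terms align via the identity $\frac{k(k+1)}{k-1} - (k+1) = \frac{k+1}{k-1}$, closing the induction cleanly.

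The main obstacle is verifying in full generality that the coloring $c$ kills every combinatorial line. The definition of $c$ is precisely tailored to collapse $w_1$ and $w_k$ (the two endpoints that differ only on the $*$-positions of $w$) to the same color, and the coordinate-by-coordinate check sketched above is the crux of the argument; everything else, including the upper-bound induction, is routine once the recurrence from Theorem 2.1 is in hand.
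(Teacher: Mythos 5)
Your proposal is correct and follows essentially the same route as the paper: the lower bound comes from a rainbow-free $(k-1)^n$-coloring that collapses two symbols into one (you merge $k$ with $1$, the paper merges $k$ with $k-1$ by coloring according to the positions of the digits $1,\dots,k-2$ — the same construction up to relabeling), and the upper bound iterates the recursion $ah(k,n)\leq k\cdot ah(k,n-1)-k-1$ starting from $ah(k,1)=k$, which you package as an induction while the paper expands the iteration into a geometric sum. Both verifications (that $c(w_1)=c(w_k)$ on every line, and the arithmetic closing the recursion) check out.
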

Notably, the nonrecursive bound for k = 3, the main case studied in this paper, does not involve any fractions since $k - 1 = 2$ and $k - 1 | k + 1$.
\begin{corollary}
Let n \(\in \mathbb{N}\). Then, \(2^n < ah(3, n) \leq 3^{n-1} + 2.\)
\end{corollary}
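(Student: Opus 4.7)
The plan is to specialize Theorem~2.2 to $k=3$ and verify that the fractional coefficients in its upper bound reduce to integers. Substituting $k=3$ into the lower bound $(k-1)^n$ gives $2^n$ immediately. For the upper bound, I will note that when $k=3$ we have $(k-1)^2-2 = 2$ and $k+1 = 4$, both divisible by $k-1 = 2$, so the expression $\frac{(k-1)^2-2}{k-1}\cdot k^{n-1}+\frac{k+1}{k-1}$ collapses to $1 \cdot 3^{n-1} + 2$, which is exactly the claimed upper bound.

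Since Theorem~2.2 carries the hypothesis $n > 2$, I will handle the remaining cases $n = 1$ and $n = 2$ separately. For these I will invoke the exact values $ah(3,1) = 3$ and $ah(3,2) = 5$ stated in the abstract (and presumably established earlier in the paper). A direct check gives $2^1 = 2 < 3 \leq 3 = 3^0 + 2$ and $2^2 = 4 < 5 \leq 5 = 3^1 + 2$, so the desired chain of inequalities holds in both base cases.

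There is essentially no obstacle in this argument: it is a direct specialization of Theorem~2.2 plus a trivial base-case verification. The only substantive content beyond plugging in $k=3$ is the divisibility observation that $(k-1) \mid (k-1)^2 - 2$ and $(k-1) \mid k+1$ when $k=3$, which is precisely the reason the bound can be written without fractions and is already flagged in the sentence preceding the corollary. Accordingly, I would expect the written proof to be at most two or three lines, reading essentially \emph{``apply Theorem~2.2 and simplify; the cases $n \in \{1,2\}$ follow from the known exact values of $ah(3,n)$.''}
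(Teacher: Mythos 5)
Your proposal matches the paper's own treatment: the paper derives this corollary purely by substituting $k=3$ into Theorem~2.2 and noting (in the sentence just before the statement) that $k-1=2$ divides both $(k-1)^2-2$ and $k+1$, so the bound becomes $3^{n-1}+2$; in fact the paper is less careful than you are, since it says only ``by implication'' and never addresses the hypothesis $n>2$. One caution on your base cases: the value $ah(3,2)=5$ is established in Section~4 \emph{using} the upper bound of this corollary together with the rainbow-free $4$-coloring of $[3]^2$, so quoting it here would be circular; instead, for $n=2$ take the upper bound directly from Theorem~2.1 ($ah(3,2)\le 3\cdot ah(3,1)-3-1=5$) and the lower bound $4<ah(3,2)$ from the rainbow-free $4$-coloring (or from the lower-bound construction in Theorem~2.2, which does not actually need $n>2$), while $n=1$ follows from $ah(3,1)=3$ exactly as you say.
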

\subsection{Proofs}
We will first note two observations that will assist in the proofs of the general bound. I will use the term "disjoint" to reference subgraphs which have no overlap in \([k]^n\), meaning no vertices are shared.
\begin{observation}
Let \(k \in \mathbb{N}\). Then the hypercube \([k]^n\) contains k disjoint copies of \([k]^{n-1}\).
\end{observation}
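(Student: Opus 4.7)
The plan is to produce the $k$ copies of $[k]^{n-1}$ explicitly by fixing one coordinate, which is really the only natural way to carve up $[k]^n$ into equal hypercube slabs. I would pick a coordinate index, for concreteness the last one, and for each $i \in [k]$ define
\[
S_i \;=\; \{\,v = (v_1,\dots,v_{n-1},v_n) \in [k]^n \,:\, v_n = i\,\}.
\]
Then I would verify three things: (a) each $S_i$ is isomorphic to $[k]^{n-1}$, (b) the $S_i$ are pairwise disjoint, and (c) every vertex of $[k]^n$ lies in some $S_i$ (the last is not strictly needed for the statement, but it is free and makes the decomposition explicit).

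For (a), the bijection $\varphi_i : S_i \to [k]^{n-1}$ sending $(v_1,\dots,v_{n-1},i) \mapsto (v_1,\dots,v_{n-1})$ is clearly a bijection of vertex sets, and it preserves the combinatorial-line structure: any line in $[k]^{n-1}$ given by a wildcard word $w \in ([k]\cup\{*\})^{n-1}$ pulls back under $\varphi_i$ to the line in $[k]^n$ given by the word $wi$, whose $n$th coordinate is the fixed letter $i$ rather than a wildcard. So $S_i$ is a faithful copy of $[k]^{n-1}$. For (b), if $v \in S_i \cap S_j$ with $i \ne j$, then $v_n = i$ and $v_n = j$ simultaneously, which is impossible; hence the $S_i$ are vertex-disjoint. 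For (c), any $v \in [k]^n$ has some value $v_n = i \in [k]$ and therefore belongs to $S_i$.

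Since this observation is essentially definitional, I do not anticipate any real obstacle; the only mild subtlety is being clear about what a ``copy'' means, namely a vertex-induced subgraph whose lines (wildcard words of length $n-1$) correspond exactly to the lines of $[k]^{n-1}$. Once we agree that a copy of $[k]^{n-1}$ is specified by fixing one coordinate to a constant, the decomposition $[k]^n = S_1 \sqcup S_2 \sqcup \cdots \sqcup S_k$ produces the required $k$ disjoint copies immediately.
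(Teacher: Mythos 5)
Your proposal is correct and matches the paper's approach exactly: the paper realizes the $k$ disjoint copies as the layers $L_i$ obtained by fixing one coordinate $t$ to the value $i$, which is precisely your decomposition $S_1 \sqcup \cdots \sqcup S_k$ (with $t = n$). The only difference is that you spell out the bijection, disjointness, and covering, which the paper leaves as an immediate remark.
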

You can imagine these copies of \([k]^{n-1}\) as layers "stacked" on top of each other. For $i \in [k]$ and some t $\in [k]$, we will define $L_i = L(i, t, k ,n) = \{w_1...w_{t-1}iw_{t+1}...w_k : w \in [k]^n\} \in [k] \cong [k]^{n-1}$. The second observation is as follows:
\begin{observation}
$ah(k, 1) = k.$
\end{observation}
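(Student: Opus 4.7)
The plan is to verify the two matching bounds directly from the definitions, since $[k]^1$ is small enough that everything reduces to a pigeonhole-type argument on a single line.

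First I would observe that $[k]^1$ consists of $k$ vertices, and the only word over $[k]\cup\{*\}$ of length one containing a star is the word $*$ itself; the line it generates is the entire vertex set $\{1,2,\ldots,k\}$. Hence there is exactly one combinatorial line in $[k]^1$, and that line contains all $k$ vertices.

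For the upper bound $ah(k,1)\le k$, I would take any $k$-coloring of $[k]^1$. Since an $r$-coloring (as used in this paper) assigns exactly $r$ distinct colors to the vertex set, a $k$-coloring of a $k$-vertex set must use each color on exactly one vertex. The unique line therefore has all $k$ vertices receiving distinct colors and is rainbow, so $k$ colors suffice to force a rainbow line.

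For the lower bound $ah(k,1)>k-1$, I would exhibit an explicit rainbow-free $(k-1)$-coloring: assign the $k-1$ colors to vertices $1,2,\ldots,k-1$ bijectively and give vertex $k$ the same color as vertex $1$ (this is in fact a minimal coloring in the sense of Definition~1.5). The unique line $\{1,2,\ldots,k\}$ then has a repeated color and is not rainbow. The main (and really the only) subtle point is the convention that an $r$-coloring is surjective in the $r$ colors; once that is pinned down, both inequalities are immediate, and combining them yields $ah(k,1)=k$.
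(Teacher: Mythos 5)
Your proof is correct: the paper states $ah(k,1)=k$ only as an observation with no accompanying argument, and your verification --- the unique line generated by the word $*$, the surjectivity convention for an exact $k$-coloring giving the upper bound, and the explicit $(k-1)$-coloring repeating one color giving the lower bound --- is exactly the justification the paper implicitly relies on. Your remark that the surjectivity of an $r$-coloring is the one point that must be pinned down is well taken, since without it the upper bound would fail.
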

\begin{lemma}
If we split \([k]^n\) into $L_1, L_2, .... L_k$ and a combinatorial line that contains points in $L_i$ and $L_j$ for $i \neq j$, then the combinatorial line contains points in $L_h$ for all $h\in[k]$.
\end{lemma}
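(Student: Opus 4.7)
The plan is to argue by case analysis on the $t$-th coordinate of the template word $w$ that defines the combinatorial line, where $t$ is the index fixed in the definition $L_i = L(i,t,k,n)$.

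First I would recall from the definitions that a combinatorial line is determined by a word $w \in ([k]\cup\{*\})^n$ with at least one $*$, and its $k$ points $w_1,\ldots,w_k$ are obtained by substituting $h$ for every $*$ in the $h$-th point. Simultaneously, membership in $L_h$ is determined solely by whether the $t$-th coordinate equals $h$. So the whole statement reduces to reading off the $t$-th coordinate of the template.

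Next I would split into two cases. In the first case, suppose $w_t = c$ for some $c \in [k]$ (i.e.\ the $t$-th position of the template is not a wildcard). Then every point $w_h$ of the line has $t$-th coordinate equal to $c$, so all $k$ points of the line lie in $L_c$, and the line cannot meet two distinct $L_i, L_j$. This contradicts the hypothesis, ruling out this case. In the second case, $w_t = *$. Then for each $h \in [k]$, substituting $h$ for the $*$ at position $t$ produces a point with $t$-th coordinate equal to $h$, i.e.\ a point in $L_h$. Hence the line contains points in every $L_h$, which is precisely the claim.

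There is no real obstacle here; the lemma is essentially definitional once one observes that the layers $L_i$ are defined by fixing a single coordinate, and combinatorial lines are defined by substituting into wildcard positions. The only care needed is to state clearly that the hypothesis ``meets two different $L_i$'' forces the template to have a $*$ at position $t$ rather than a fixed letter, after which the conclusion is immediate.
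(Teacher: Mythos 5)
Your proposal is correct and follows essentially the same argument as the paper: observing that meeting two distinct layers forces the template to have a $*$ at position $t$, after which substitution of each $h\in[k]$ yields a point in every $L_h$. The only cosmetic difference is that you phrase the key step as a case analysis on $w_t$ rather than directly noting that two points differing in coordinate $t$ force a $*$ there.
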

\begin{proof}
Suppose that a combinatorial line contains points in $L_i$ and $L_j$ for $i \neq j$, say points v and u. Then $v_t \neq u_t$; more specifically, $v_t = i$ and $u_t = j$. This tells us that there is a $*$ at position t and since the combinatorial line is formed by replacing $*$ with every element of $[k]$, there exists a point w such that $w_t = h$ for all $h \in [k]$. 
\end{proof}
With the observations and lemma, we can easily prove the recursive bounds in Theorem 2.1.
\begin{proof}
For the lower bound, we first note that \(\exists\) a rainbow free \((ah(k, n-1)-1)\)-coloring of \([k-1]^n\) by definition of \(ah(k, n-1)\). We will construct a rainbow-free coloring of \([k]^n\) that involves \((k-2)\cdot ah(k, n-1) - k + 3\) colors. First, we color layers $L_1, L_2, ... L_{k-2}$ with \(ah(k, n-1)-1\) distinct colors such that there are no rainbow combinatorial lines on each copy. We will then choose a new color c to create a monochromatic coloring of $L_{k-1}$ and $L_k$. We now have a \((k-2)\cdot(ah(k, n-1)-1) +1 = (k-2)\cdot ah(k, n-1) - k + 3\) coloring of \([k]^n\). This coloring is rainbow free; each individual $L_i$ contains no rainbow combinatorial lines. The remaining lines we need to check are those that contain exactly one point in each layer by Lemma 2.4. These lines will always have two points colored c, and thus are not rainbow. \\
\indent \indent 
For the upper bound, we must show that every \(k \cdot ah(k , n-1) - k - 1\) coloring of \([k]^n\) will contain a rainbow combinatorial line. Seeking a contradiction, we assume there exists a $k(ah(k, n-1) -1) - 1$ coloring of $[k]^n$ that is rainbow free. Let $C_i$ be the color set of $L_i$, i.e., $|C_i|$ is the number of different colors appearing on $L_i$. Then, we know that $|C_i| \leq ah(k, n-1) - 1 \ \forall i \in [k]$ since the coloring is rainbow free. Thus the total number of colors $|C| = |C_1| \cup |C_2| \cup ... \cup |C_k| \leq k(ah(k, n-1) -1) - 1$. There are two possible cases for when $|C| = k(ah(k, n-1) -1) - 1$:\\
1. The $C_i$ are pairwise disjoint, which means $|C_j| = ah(k, n-1) - 2$ for some $j \in [k]$ and the rest of the $|C_i| = ah(k, n-1) - 1$.\\
2. All $|C_i| = ah(k, n-1) -1$ and are pairwise disjoint except for exactly one element in C, which is shared by exactly two of the $C_i$'s. We will denote this element as c'.\\
In the first case, we immediately see that if all $C_i$ are pairwise disjoint, then any combinatorial line that contains a point on each $L_i$ will be rainbow, which contradicts our assumption. Thus, we only need to consider the second case. \\
Let us consider an arbitrary combinatorial line with points $v_1, v_2, ..., v_k$ where each $v_i \in L_i$. Since our coloring is rainbow free, there must be some $v_i$ and $v_j$ colored with the same color. This color must be c', as it is the only color shared between layers. Since all possible points $v \in L_i \cup L_j$ can be part of a combinatorial line that contains a point from every layer, each $v$ must be colored with c'. This implies $|C_i| = |C_j| = 1$. But, $|C_i| = ah(k, n-1) - 1$, implying $ah(k, n-1) = 2 \geq ah(k, 1) = k$, contradicting $k, n > 2$. Thus, there does not exist a rainbow free coloring of $[k]^n$ with $k \cdot ah(k, n-1) - k - 1$ colors.

\end{proof}

Now, we will prove Theorem 2.2.
\begin{proof}
For the lower bound, we will construct a rainbow free coloring of $[k]^n$ with $(k-1)^n$ colors. Our coloring scheme will involve the specific digits of the words/points in the hypercube, and we will color two points the same color if and only if the digits for $1, 2, ... , k - 2$ are in the same spot. There can be multiple occurances of each number, and so long as the indices of those numbers are the same, then the two points will be the same color. For example, let $s_1, s_2 \in [4]^3$ be the points $1123$ and $1124$. Since $s_1$ and $s_2$ both have "1" for the first two digits and "2" for the third, they will share the same color. Notice that this coloring scheme will always prevent rainbow combinatorial lines, as atleast two points along the same line will have the same color. Consider an arbitrary combinatorial line $\ell$ based on the word $x_1x_2...x_n$, with some $x_{i1}, x_{i2}, ... x_{ij} = *$. Then, the two points in $\ell$ when $* = k - 1$ and $* = k$ will have the same color. Indeed, the indices for $1, 2, ..., k - 2$ will be the exact same, since the only difference between the words is the $*$ changing from $k - 1$ to $k$. Thus, we have constructed a rainbow free coloring of $[k]^n$ with $(k-1)^n$ colors. \\
\indent \indent The upper bound will use our observation that $ah(k, 1) = k$ and continuously apply the recursive upper bound on the known value of ah(k, 1). Then, the upper bound for $[k]^n$ will take the form of: \[k\cdot(k\cdot(k\cdot....(k\cdot(k) - k - 1) ... - k - 1) - k - 1) - k - 1. \] Expanding this out and compressing using summation notation, we get this expression: \[k^n - k^{n-1} - 2\cdot\sum_{i = 1}^{n-2} k^i - 1.\]  Applying the geometric summation formula and simplifying gives us: 
\begin{equation*}
\begin{split}
k^n - k^{n-1} - 2\cdot(\frac{k^{n-1} - 1}{k - 1} + 1) -1 &= (k-1)k^{n-1} - 2\cdot\frac{k^{n-1} - 1}{k - 1} + 1 \\ 
&= (k-1)k^{n-1} - 2\cdot\frac{k^{n-1}}{k-1} + \frac{2}{k-1} + 1 \\
&= k^{n-1}\cdot((k-1) - \frac{2}{k-1}) + \frac{k+1}{k-1} \\
&= \frac{(k-1)^2-2}{k-1}\cdot k^{n-1}+\frac{k+1}{k-1}.
\end{split}
\end{equation*}
We have now proved Theorem 2.2, and by implication, Corollary 2.3.
\end{proof}
\begin{corollary}
Suppose we know $ah(k, m) \leq a_m$. Then for $n > m$, $ah(k, n) \leq \frac{(a_m - 1)(k - 1) - 2}{k - 1}\cdot k^{n-m} + \frac{k + 1}{k - 1}$.
\end{corollary}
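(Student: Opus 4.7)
The plan is to iterate the recursive upper bound of Theorem 2.1 exactly $n-m$ times, starting from the hypothesis $ah(k,m) \leq a_m$, and then solve the resulting affine recurrence in closed form. This is the same strategy used to prove Theorem 2.2, except that the base point of the iteration is $(m, a_m)$ rather than $(1, k)$.

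First, I would define $f(j) = ah(k, m+j)$ for $j = 0, 1, \ldots, n-m$, so that $f(0) \leq a_m$ and Theorem 2.1 gives the recurrence $f(j+1) \leq k\,f(j) - (k+1)$. This is an affine recurrence with unique fixed point $p = \tfrac{k+1}{k-1}$ (exactly the constant term already visible in Theorem 2.2), and subtracting $p$ from both sides converts it into the purely geometric recurrence
\[
f(j+1) - p \;\leq\; k\bigl(f(j) - p\bigr).
\]
An immediate induction on $j$ yields $f(j) - p \leq k^j\bigl(f(0) - p\bigr) \leq k^j\bigl(a_m - p\bigr)$.

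Setting $j = n-m$ and rearranging then gives
\[
ah(k,n) \;\leq\; k^{n-m}\!\left(a_m - \tfrac{k+1}{k-1}\right) + \tfrac{k+1}{k-1}.
\]
To match the stated form, I would simplify the coefficient using the identity $a_m - \tfrac{k+1}{k-1} = \tfrac{a_m(k-1) - (k+1)}{k-1} = \tfrac{(a_m - 1)(k-1) - 2}{k-1}$, which is pure algebra. As a sanity check, plugging in $n = m$ recovers $a_m$, and plugging in $a_m = k$ (the value $ah(k,1) = k$ guaranteed by Observation 2) recovers the upper bound of Theorem 2.2, confirming that this corollary is the natural generalization.

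There is no real obstacle here: the content is entirely in Theorem 2.1, and the only work is bookkeeping the geometric sum / affine recurrence. The one place to be careful is in verifying the two equivalent forms of the coefficient $(a_m-1)(k-1)-2 = a_m(k-1) - k - 1$, so that the expression produced by the recurrence really does coincide with the statement of the corollary.
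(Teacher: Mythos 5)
Your proposal is correct and matches the paper's intent: the paper proves this corollary exactly by iterating the recursive upper bound of Theorem 2.1 from the base point $ah(k,m)\leq a_m$ and simplifying, just as in the proof of Theorem 2.2. Your fixed-point reformulation of the affine recurrence is only a cosmetic repackaging of the same geometric-sum computation, and your algebraic identity $(a_m-1)(k-1)-2 = a_m(k-1)-(k+1)$ correctly reconciles the two forms of the coefficient.
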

This can be proven using the same expansion and simplification as the general upper bound, but this corollary importantly tells us that if we find a large $a_m$ that is less than the general upper bound, we will get a much better bound for future anti-Hales Jewett Numbers.
\section{Anti-Hales Jewett Number for 2-Hypercubes}
The anti-Hales Jewett number for 2-hypercubes is very easy to show through induction, and so we arrive at the following theorem. Note that we will abbreviate "rainbow free" as "RF."
\begin{theorem}
For all n $\in \mathbb{N}, ah(2, n) = 2$.
\end{theorem}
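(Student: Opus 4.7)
The plan is to induct on $n$. The lower bound $ah(2,n) \geq 2$ is immediate, since a $1$-coloring is monochromatic and a combinatorial line in $[2]^n$ has only two points, which cannot be rainbow unless they carry two distinct colors. So the real content is the upper bound: every genuine $2$-coloring of $[2]^n$ contains a rainbow line.

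For the base case $n = 1$, the hypercube $[2]^1$ has two points, and the unique combinatorial line (from the word $*$) consists of both of them. Any honest $2$-coloring assigns them different colors, which gives a rainbow line; this also recovers Observation 2.6 in the special case $k = 2$.

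For the inductive step, I will assume $ah(2, n-1) = 2$ and take an arbitrary $2$-coloring of $[2]^n$. The idea is to split along some coordinate $t$ into the layers $L_1, L_2$ from Observation 2.5, each isomorphic to $[2]^{n-1}$. If either layer uses both colors, then the inductive hypothesis, applied inside that layer, produces a rainbow combinatorial line there; such a line remains a combinatorial line of the full hypercube, and we are finished. Otherwise each $L_i$ is monochromatic, and because the overall coloring uses two colors, $L_1$ and $L_2$ must receive opposite colors. In that situation any combinatorial line whose defining word has a $*$ at position $t$ (for instance $*1\cdots 1$) places exactly one point in $L_1$ and one in $L_2$ by Lemma 2.4, and is therefore rainbow.

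The main obstacle is essentially just bookkeeping rather than a genuine difficulty. The one subtle point is that the argument relies on the paper's convention that an $r$-coloring uses exactly $r$ distinct colors; otherwise a ``$2$-coloring'' could collapse to a monochromatic one and evade the conclusion. Everything else reduces cleanly to the previously established observations about stacking layers.
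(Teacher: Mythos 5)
Your proof is correct and follows essentially the same route as the paper: induction on $n$, splitting $[2]^n$ into two layers isomorphic to $[2]^{n-1}$, and observing that either a layer already carries both colors (inductive hypothesis gives a rainbow line) or the two monochromatic layers differ in color and a line crossing them is rainbow. Your phrasing as ``every 2-coloring contains a rainbow line'' is just the contrapositive of the paper's ``a rainbow-free coloring uses only one color,'' and your remark about the exactly-$r$-colors convention is a reasonable clarification rather than a new idea.
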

\begin{proof}
We will begin with the base case: n = 1. If n = 1, then we already know that $ah(k, 1) = k$ implies $ah(2, 1) = 2$. \\
Now assume the theorem is true for all $n' < n$. Then, we know that $ah(2, n-1) = 2$.  Suppose we have a RF-coloring of $[2]^n$ with r colors (where $r = ah(2,n) - 1$). Then since $[2]^n$ contains two layers of $[2]^{n-1}$ ($L_1$ and $L_2$), we have that $r \leq 2$, since each $L_i$ must contain one color. Since the length of combinatorial lines in $[2]^n$ is 2 for all $n \in \mathbb{N}$ and that every line is rainbow free, lines that contain points on $L_1$ and $L_2$ must be monochromatic. Therefore, $L_1$ and $L_2$ must be colored monochromatically with the same color, which means $r = 1$.\\
By induction, we know this to be true for all natural numbers n.
\end{proof}
\section{Anti-Hales Jewett Number for 3-Hypercubes}
Recall Corollary 2.3:
\begin{corollary*}
Let n \(\in \mathbb{N}\). Then, \(2^n < ah(3, n) \leq 3^{n-1} + 2.\)
\end{corollary*}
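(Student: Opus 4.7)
The plan is to obtain the corollary by direct specialization of Theorem 2.2 to $k=3$, supplemented by a small-$n$ check.

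First I would observe that Theorem 2.2 is stated only for $k, n > 2$, so strictly speaking the general result does not immediately cover $n = 1, 2$; I would handle these two cases first by appealing to Observation 2 (which gives $ah(3,1) = 3 = 3^0 + 2$ and the lower bound $2 < 3$) and, for $n = 2$, by verifying directly that $4 < ah(3, 2) \leq 3^1 + 2 = 5$ using the recursive bound from Theorem 2.1 applied to $ah(3,1) = 3$ (the upper recursive bound gives $3 \cdot 3 - 3 - 1 = 5$, and the lower recursive bound gives $1 \cdot 3 - 3 + 3 = 3 < ah(3,2)$, so in fact $4 \leq ah(3,2) \leq 5$, which is enough for the inequality claimed).

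For the main range $n > 2$, the argument is a one-line substitution. Plugging $k = 3$ into the lower bound $(k-1)^n < ah(k, n)$ from Theorem 2.2 immediately gives $2^n < ah(3, n)$. Plugging $k = 3$ into the upper bound and simplifying,
\begin{equation*}
\frac{(k-1)^2 - 2}{k-1} \cdot k^{n-1} + \frac{k+1}{k-1} = \frac{4 - 2}{2} \cdot 3^{n-1} + \frac{4}{2} = 3^{n-1} + 2,
\end{equation*}
so $ah(3, n) \leq 3^{n-1} + 2$, as required.

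There is no real obstacle: the only subtlety is noting that $k - 1 = 2$ divides both $(k-1)^2 - 2 = 2$ and $k+1 = 4$, which is exactly why the bound for $k = 3$ comes out cleanly without fractions (as the paper already remarks after Theorem 2.2). Combining the base cases with the specialization of Theorem 2.2 completes the proof.
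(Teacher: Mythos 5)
Your main step (substituting $k=3$ into Theorem 2.2 and simplifying $\frac{(k-1)^2-2}{k-1}k^{n-1}+\frac{k+1}{k-1}$ to $3^{n-1}+2$) is exactly the paper's route, and you were right to notice that Theorem 2.2 is only stated for $k,n>2$, so the cases $n=1,2$ need separate treatment. However, your handling of $n=2$ has a genuine gap: the recursive lower bound of Theorem 2.1 gives $(k-2)\cdot ah(3,1)-k+3 = 3 < ah(3,2)$, i.e.\ $ah(3,2)\geq 4$, and you then write ``$4 \leq ah(3,2) \leq 5$, which is enough for the inequality claimed.'' It is not: the corollary asserts the strict inequality $2^2 = 4 < ah(3,2)$, i.e.\ $ah(3,2)\geq 5$, and $ah(3,2)\geq 4$ does not deliver that.

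The fix is immediate from material already in the paper. The lower-bound construction in the proof of Theorem 2.2 (for $k=3$: color two points of $[3]^n$ identically iff the digit $1$ occupies the same set of positions) uses $2^n$ colors, is rainbow free, and works for every $n\geq 1$ — nothing in that construction needs $n>2$. Invoking it for $n=2$ (or simply citing the explicit rainbow-free $4$-coloring of $[3]^2$ in Figure 1) gives $4 < ah(3,2)$, and together with your recursive upper bound $3\cdot ah(3,1)-3-1=5$ this closes the $n=2$ case; the $n=1$ case and the range $n>2$ are fine as you argued.
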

Interestingly, the upper bound for $k = 3$ and $n = 2, 3$ is sharp: $ah(3, 2) = 5$ and $ah(3, 3) = 11$. However, for $n = 4$, the upper bound does not seem to be sharp; I have only found up to a 23 coloring of $[3]^4$ that contains no rainbows. To prove $ah(3, 2) = 5$ and $ah(3, 3) = 11$, it will suffice to prove that $\exists$ a rainbow free 4 coloring of $[3]^2$ and a rainbow free 10-coloring of $[3]^3$.\\
\begin{figure}[h]
    \centering
    \includegraphics[width=0.4\textwidth]{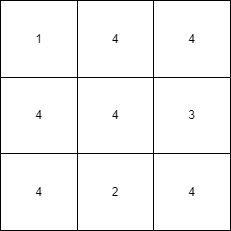}
    \caption{A Rainbow-Free 4-Coloring of $[3]^2$}
    \label{fig:coloring1}
\end{figure}
In Figure \ref{fig:coloring1}, the word $yz$ corresponds to the tile on the y'th row and z'th column. In this case, the labeling would not necessarily matter, but for the three dimensional case, it may. \\
\begin{figure}[h]
    \centering
    \includegraphics[width=1.0\textwidth]{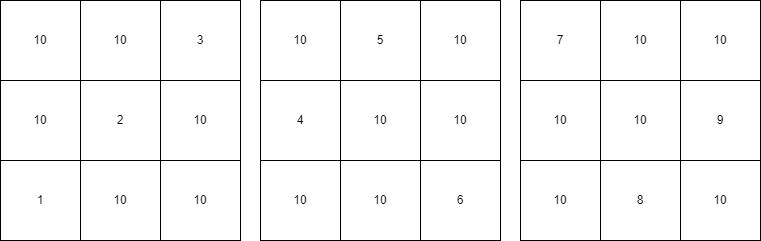}
    \caption{A Rainbow-Free 10-Coloring of $[3]^3$}
    \label{fig:coloring2}
\end{figure}
We maintain the same labeling of the word in Figure \ref{fig:coloring2}, except adding a first digit/letter to reference the layer $yz$ is on; the word $xyz$ refers to the tile on the x'th layer, y'th row, and z'th column, with layers ascending from left to right. \\
Although we do not currently know the anti-Hales Jewett Number for dimensions above 3, we do have a better bound than the one given by Theorem 2.2, by using Corollary 2.5 and some new Lemmas to show that there are only two rainbow free 10-colorings of $[3]^3$, which we will use to show the upper bound $ah(3, 4) \leq 27$. Furthermore, we can sharpen the upper bound for all $n > 4$ as well with Corollary 2.3. For the following proofs, we will consider an arbitrary partition of $[3]^3$ into three disjoint copies of $[3]^2$ and refer to them as layers $L_1, L_2, L_3$. Unlike before, these layers are arbitarily ordered unless specified to maintain generality. 
\begin{lemma}
Consider an arbitrary RF 10-coloring of $[3]^3$. Then for any partition of $[3]^3$ into three layers of $[3]^2$, each layer contains 3 distinct colors and all share exactly 1 color with each other. 
\end{lemma}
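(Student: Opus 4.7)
My approach is to apply inclusion-exclusion to the color sets of the three layers and then exploit the rainbow-free condition on the family of \emph{simple cross-layer lines}---combinatorial lines whose only wildcard is the partition coordinate, so that each such line contains exactly one point from each of $L_1, L_2, L_3$. Let $C_i$ denote the set of colors occurring on $L_i$; since each $L_i \cong [3]^2$ is rainbow-free, $|C_i| \le ah(3,2)-1 = 4$. Because the coloring uses all $10$ colors, $|C_1 \cup C_2 \cup C_3| = 10$, and inclusion-exclusion combined with $\sum_i |C_i| \le 12$ leaves only three possibilities: $\sum_i |C_i| \in \{10, 11, 12\}$. The observation driving every case is that every point of $L_i$ lies on at least one simple cross-layer line, and the rainbow-free condition forces two of that line's three points to share a color, which must lie in some pairwise intersection $C_i \cap C_j$.

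I would first eliminate $\sum_i |C_i| \le 11$. When the sum is $10$, all pairwise intersections (and the triple intersection) are forced to be empty, so every simple cross-layer line has three pairwise distinctly-colored points---rainbow, contradiction. When the sum is $11$, a short arithmetic check forces $|C_1 \cap C_2 \cap C_3| = 0$ with exactly one pair of layers sharing exactly one color $\omega$ and the third layer disjoint from both; then on every simple cross-layer line the isolated layer's point cannot produce a color match, forcing both other points to be colored $\omega$, so those two layers would be monochromatic---contradicting $|C_i| = 4$.

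For the remaining case $\sum_i |C_i| = 12$ (so $|C_i| = 4$ for each $i$ and $\sum_{i<j}|C_i \cap C_j| - |C_1 \cap C_2 \cap C_3| = 2$), the structural possibilities split into (a) the triple intersection has size $1$ with each pairwise intersection of size exactly $1$, which is the desired conclusion since a triple intersection of size $1$ sitting inside pairwise intersections of size $1$ forces all three pairs to share the same single color, or (b) the triple intersection is empty with pairwise sizes $(2,0,0)$ or $(1,1,0)$ up to permutation. In each sub-case of (b), some pair of layers is disjoint, and the same cross-layer line argument will be used to force one of the layers to use at most two colors, contradicting $|C_i| = 4$.

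The main obstacle is the $(1,1,0)$ sub-case of (b), where two distinct pairs of layers each share a color, so a cross-layer line has two possible sources for a color match rather than one; the resolution is to use the $|C_i|=4$ slack in the pivotal layer (the one participating in both shared pairs) to pick a point whose color is neither of the two shared colors, so that on its simple cross-layer line no coincidence is possible between that point and either other point and the remaining pair is disjoint---yielding a rainbow line. With (b) eliminated, only (a) survives, giving exactly the structure claimed: each layer uses $4$ colors, consisting of $3$ colors private to that layer together with $1$ color shared by all three layers.
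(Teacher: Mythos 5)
Your proof is correct, but it takes a noticeably different route from the paper's. Both arguments rest on the same two ingredients---each layer, being rainbow-free, carries at most $ah(3,2)-1=4$ colors, and the ``pillar'' lines meeting each layer in one point must not be rainbow---but the paper reaches the structure by a staged analysis of the number of colors \emph{private} to a layer: it first shows no layer can have four private colors, then that no two layers share more than one color, then rules out a layer having $0$, $1$, or $2$ private colors, and finally deduces the common color, each stage being a separate counting-plus-pillar-line contradiction. You instead apply inclusion--exclusion to the color sets $C_1,C_2,C_3$, reduce to $\sum_i |C_i|\in\{10,11,12\}$, and classify the possible intersection patterns, eliminating each unwanted pattern with a single pillar line (in the $(1,1,0)$ sub-case via a point of the pivotal layer colored outside both shared colors). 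This buys a shorter, more systematic argument in which the paper's ``no two layers share more than one color'' step is absorbed into the arithmetic, and it exhibits the claimed structure---three colors private to each layer plus one color common to all three, which is exactly how the paper uses ``3 distinct colors''---as the unique surviving pattern; the paper's version, while longer, spells out the local rainbow-line constructions more explicitly. One small imprecision in yours: when $\sum_i|C_i|=11$ the layer sizes are $(4,4,3)$, so the two sharing layers need not both have four colors; but your conclusion that they are monochromatic ($|C_i|=1$) contradicts a size of $3$ just as well as $4$, so the argument stands.
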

\begin{proof}
We will first show that every copy of $[3]^2$ contains at most three distinct colors in a RF 10-coloring of $[3]^3$. For the sake of contradiction, suppose $\exists$ a partition in which a layer of $[3]^2$ contains 4 distinct colors. which we will designate $L_1$. Then the remaining 6 colors must be split among the other two layers, $L_2$ and $L_3$, since the coloring would no longer be RF if $L_1$ contains more than 4 colors. Notice that this implies some layer contains at least one distinct color (in actuality, it must be two distinct colors, but that observation is unnecessary for this proof) since we cannot have a layer with 5 colors. Let this layer be $L_2$. But if we consider the combinatorial line that contains points in all three layers which includes the distinctly colored point in $L_2$, we see that it must be rainbow, since two of the points are distinctly colored, meaning that the colors do not appear outside of their respective layers. Thus we contradict the RF assumption, so every layer must contain at most 3 distinct colors. \\
\indent \indent Now, we will show that there cannot be two copies which share more than one color with each other, which will help us prove that no copy can have less than 3 distinct colors. Again, for the sake of contradiction, assume $\exists$ two layers, $L_1$ and $L_2$, that share more than one color with each other. Sharing 3 colors with each other would result in a layer containing more than 4 colors, since 7 colors must be split among the 3 layers, among which $L_1$ and $L_2$ already contain 3 colors (meaning they can only hold one more color). Sharing 4 colors is essentially the same; a layer would have more than 5 colors, which contradicts the RF assumption. Thus, if two layers share more than one color with one another, they must share exactly two colors with each other. Suppose $L_3$ has $0 \leq j \leq 3$ distinct colors. Then, any new color we add to the third layer must be shared with at least one of $L_1$ or $L_2$. Then, the remaining number of colors, $10 - j - 2$, must be split among the first two layers. But since $5 \leq 10 - j - 2 \leq 8$ and the first two layers are already sharing two colors, one of these two layers must always contain more than 4 colors, and we have a contradiction to the RF assumption. Thus, two layers cannot share more than one color with one another. \\
\indent \indent Now, we have the properties to prove that a layer cannot contain less than 3 distinct colors. The cases where a layer contains 0 distinct colors or 1 distinct color are trivial; in the former, that would imply 10 colors are split among the other two layers, guaranteeing a layer containing 5 or more colors. Similarly, in the latter, 9 colors split among the other two layers would also guarantee a layer containing 5 or more colors by the pigeonhole principle. The only case we would need to deal with is if a layer contains 2 distinct colors. For contradiction, suppose that we have a RF 10-coloring of $[3]^3$ with a layer, $L_3$, containing 2 distinct colors. Now, let $j_1,j_2 \in \{0, 1\}$ be the number of colors shared with $L_1$ and $L_2$, respectively. Then the number of distinct colors split between $L_1$ and $L_2$ is: $10 - j_1 - j_2 - 2$, which is 6, 7, or 8.  Note that in the case where the color shared with the other layers is the same, we subtract one, which will still result in a value of 6, 7, or 8. We notice in the case of 7 and 8 that one layer must contain four distinct colors by the pigeonhole principle, which contradicts the RF assumption. We now must deal with the case where 6 distinct colors are split between $L_1$ and $L_2$, i.e., when $L_3$ shares one color with $L_1$ and another with $L_2$. This means that $L_3$ has 4 colors total. With 6 colors that need to collectively appear on $L_1$ and $L_2$, we also know that $L_1$ and $L_2$ cannot share a color; otherwise, that color must not appear on $L_3$ and $L_1$ or $L_2$ is forced to contain more than 4 colors, which contradicts the RF assumption. Then, $L_1$ and $L_2$ must each contain colors distinct from one another, which means that $L_1$ and $L_2$ also contain 4 colors. But this coloring must contain a rainbow since $L_3$ contains distinct colors; any combinatorial line containing a distinctly colored point on $L_3$ and has two other points on $L_1$ and $L_2$ will be rainbow. Thus, we have a contradiction and each layer must contain exactly 3 distinct colors. \\
\indent \indent The three layers all sharing one color also follows from the previous statements; if any two layers share no colors, then any combinatorial line spanning the 3 layers containing a distinctly colored point on the third layer would be rainbow. We have now proven the lemma.
\end{proof}
With this crucial property of RF 10-colorings of $[3]^3$ established, we can now make these "arbitrary" 10-colorings more specific to narrow down the number of possible colorings.
\begin{lemma}
A RF 10-coloring of $[3]^3$ is always minimal with the shared color being dominant. 
\end{lemma}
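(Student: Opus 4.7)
The plan is to apply Lemma 4.1 to each of the three coordinate directions $t \in \{1, 2, 3\}$ that partition $[3]^3$ into layers $L_1^{(t)}, L_2^{(t)}, L_3^{(t)}$. For each such $t$, let $c^{(t)}$ denote the single color shared across all three direction-$t$ layers. Because Lemma 4.1 forces the total color count to be $3 \cdot 3 + 1 = 10$, every color other than $c^{(t)}$ appears in exactly one direction-$t$ layer.

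From this, any color $x \notin \{c^{(1)}, c^{(2)}, c^{(3)}\}$ is non-shared in every direction and is thus pinned to one layer in each of the three directions. Since the intersection of three coordinate hyperplanes in $[3]^3$ is a single vertex, each such $x$ occupies exactly one point. So every color outside the set of shared colors is automatically a singleton.

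The heart of the argument is a case analysis on the multiset $\{c^{(1)}, c^{(2)}, c^{(3)}\}$. Observe that whenever $c^{(t)} \neq c^{(t')}$ for distinct $t, t'$, the color $c^{(t)}$ is non-shared in direction $t'$, hence pinned to a single direction-$t'$ layer. Therefore, if the three $c^{(t)}$'s are pairwise distinct, each lies on a combinatorial line of $3$ vertices (the intersection of its two pinned layers), and together with the remaining $7$ singleton colors this covers at most $3 \cdot 3 + 7 = 16$ vertices --- short of $|[3]^3| = 27$, a contradiction. If exactly two of the $c^{(t)}$'s agree, the common value is pinned to a single $[3]^2$ layer (at most $9$ vertices), the third is pinned to a line ($3$ vertices), and the remaining $8$ colors are singletons, covering at most $9 + 3 + 8 = 20 < 27$, again a contradiction. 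Thus all three $c^{(t)}$'s coincide; denote the common value by $c$. Then the $9$ non-shared colors occupy exactly $9$ of the $27$ vertices, forcing $c$ to cover the remaining $18$. This is precisely a minimal coloring with $c$ dominant.

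The main obstacle is recognizing that Lemma 4.1 must be invoked simultaneously across all three coordinate directions to extract the global rigidity; once each non-shared color is pinned to a specific layer in every direction, the remaining work is pure counting. The tightest of the counting subcases is the one where two of the $c^{(t)}$'s agree, since that common color may a priori fill an entire $[3]^2$ layer, but even this maximum slack still leaves $7$ vertices unaccounted for.
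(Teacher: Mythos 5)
Your proof is correct, but it takes a genuinely different route from the paper. The paper fixes a single layer direction and uses the rainbow-free property directly on the nine ``pillar'' lines transverse to the layers: since the only color appearing in more than one layer is the shared color $c$, each pillar line must repeat $c$, giving at least $2\cdot 9=18$ points colored $c$, and the remaining $9$ points must then carry the other $9$ colors once each. You instead invoke Lemma 4.1 simultaneously in all three coordinate directions, pin every non-shared color to the single vertex where its three layers intersect, and rule out the possibility that the direction-wise shared colors $c^{(1)},c^{(2)},c^{(3)}$ differ by a covering count ($\le 16$ or $\le 20$ vertices versus $27$). Your version never re-uses rainbow-freeness beyond what Lemma 4.1 already encodes, and as a bonus it makes explicit that the shared color is the same in all three directions (the paper gets this only implicitly, since after minimality the shared color for any partition must be the dominant one); the cost is that it is longer and leans on Lemma 4.1 holding for every partition, whereas the paper's pillar-line argument needs only one partition and is essentially a two-line count.
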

\begin{proof}
By Lemma 4.1, we know that each layer contains 3 distinct colors and share one color, which we will denote c, with each other. Let $[3]^3 = L_1 \cup L_2 \cup L_3$ where $L_i = {ix_2x_3 : x_2, x_3 \in [3]}$. We will define the combinatorial line described by $*x_2x_3$ to be a "pillar line." Since every pillar line must be RF, two points $jx_2x_3$ and $kx_2x_3$ on the pillar line must be the same color, implying the color must be c.  Thus for every distinctly colored point, the associated pillar line must contain two points colored c. In other words, there are 18 points colored with c and thus. Since there are 9 distinct colors and 9 points remaining, there must be exactly 9 distinctly colored points. Thus, the shared color is dominant and the 10-coloring is minimal. 
\end{proof}
Lemma 4.2 essentially tells us that each layer is also a minimal coloring of $[3]^2$ with 4 colors, which we can easily group into two possible sets.
\begin{lemma}
There are two sets of minimal 4-colorings of $[3]^2$, $S_1$ and $S_2$, shown below.
\end{lemma}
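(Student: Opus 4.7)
My plan is to reduce the classification to a small combinatorial enumeration of where the three distinctly colored points can sit. First, I would unpack the structure of a minimal $4$-coloring of $[3]^2$: of the nine points, exactly six receive the dominant color and three receive pairwise distinct non-dominant colors. The key reduction is that no combinatorial line of $[3]^2$ can contain two distinctly colored points, because such a line would display two distinct colors together with the dominant color, yielding three different colors and hence a rainbow. So the problem becomes: where can we place three points, one per non-dominant color, so that no combinatorial line contains two of them?

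Next, I would list the seven combinatorial lines of $[3]^2$: the three rows $i*$, the three columns $*i$, and the main diagonal $**$. Applying the ``at most one distinct point per line'' constraint to the rows and columns forces the three distinct points to form a transversal of the $3 \times 3$ grid, i.e.\ the pattern $\{(i,\sigma(i)) : i \in [3]\}$ for some permutation $\sigma \in S_3$. Applying the constraint to the diagonal rules out exactly $\sigma = \mathrm{id}$, since that is the only permutation with more than one fixed point (indeed, three), placing all distinct points on $\{(1,1),(2,2),(3,3)\}$. Any non-identity $\sigma$ has at most one fixed point and so puts at most one distinct point on the diagonal, which is harmless.

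Finally, I would partition the five surviving permutations by their number of fixed points. The two $3$-cycles have no fixed points and therefore place no distinctly colored point on the main diagonal; the three transpositions each have exactly one fixed point and therefore place exactly one distinctly colored point on the main diagonal. Matching these two classes to the pictures, one class is $S_1$ and the other is $S_2$. The partition is exhaustive and disjoint because every non-identity element of $S_3$ is either a $3$-cycle or a transposition.

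The main obstacle is not a deep argument but careful bookkeeping. One must be sure that only $**$ (and not the anti-diagonal $\{(1,3),(2,2),(3,1)\}$) is a combinatorial line, since the definition requires every $*$ to be replaced by the same value; and one must confirm that both classes really do yield rainbow-free colorings, which follows immediately because in each of the five permutation patterns every line contains at most one distinctly colored point, so no line can display three different colors.
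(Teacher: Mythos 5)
Your proof is correct, and it takes a genuinely different route from the paper. The paper argues algorithmically: it picks a starting point to receive a non-dominant color, splits into cases according to whether that point lies on three combinatorial lines (a diagonal point) or only two, eliminates the points on the lines through it, and then enumerates the resulting colorings pictorially (Figures 5 and 6), observing that after discarding repeats everything lands in $S_1$ or $S_2$. You instead reduce the whole classification to a clean combinatorial criterion: in a minimal rainbow-free coloring the three non-dominant points must meet every row, every column, and the main diagonal at most once, hence they form the graph $\{(i,\sigma(i))\}$ of a non-identity permutation $\sigma \in S_3$, and the two sets correspond to the two cycle types --- the two $3$-cycles (no point on the main diagonal, matching the two-element set $S_1$) and the three transpositions (exactly one diagonal point, matching the three-element set $S_2$). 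Your key observation that only $**$ is a line, not the anti-diagonal, is exactly the subtlety that makes the count come out to five rather than four, and your closing verification that every such pattern is rainbow-free (each line carries at least two dominant points) covers the converse direction, which the paper handles only implicitly through its figures. What your approach buys is an explicit, figure-free count of the five patterns and a transparent invariant (diagonal occupancy, equivalently cycle type) distinguishing $S_1$ from $S_2$; what the paper's approach buys is a constructive procedure whose output pictures are reused directly in the later stacking arguments. The only loose end in your write-up, unavoidable without the figures, is the final matching of the two classes to the labels $S_1$ and $S_2$, which is fixed by their cardinalities as used in Lemmas 4.4 and 4.5.
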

\begin{figure}[h]
    \centering
    \includegraphics[width=0.5\textwidth]{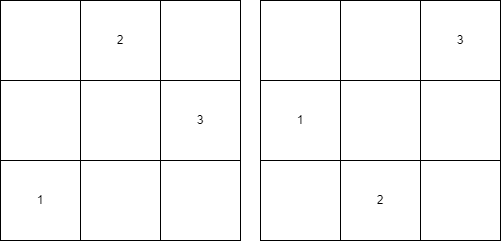}
    \caption{$S_1$}
    \label{fig:coloring3}
\end{figure}
\begin{figure}[h]
    \centering
    \includegraphics[width=0.75\textwidth]{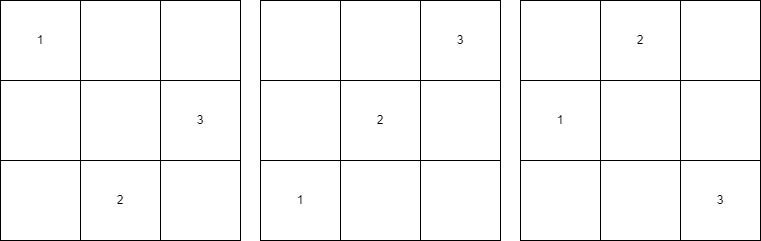}
    \caption{$S_2$}
    \label{fig:coloring4}
\end{figure}
\begin{proof}
Since there is a dominant color, we can algorithmically show that there are very few minimal RF 4-colorings of $[3]^2$. We will choose a starting point which will be colored with a nondominant color, then eliminate points along the same combinatorial line as the start, then repeating with the remaining points until we have a complete coloring. Since we are in two dimensions, there are two types of starting points: a point part of 3 combinatorial lines or a point part of 2 combinatorial lines. \\
\indent \indent In the first case, we immediately receive a minimal RF 4-coloring, since 7 points will have been colored after the first step. Since the start point is part of 3 combinatorial lines, one of the combinatorial lines is the diagonal. Thus, out of each column and row, atleast two points will have been colored. This means the remaining 2 uncolored points are not along the same row or column, and neither are part of the diagonal. Thus, coloring these two points with distinct colors results in a RF minimal coloring. These colorings are shown in Figure \ref{fig:coloring5}.\\
\begin{figure}[h]
    \centering
    \includegraphics[width=1.0\textwidth]{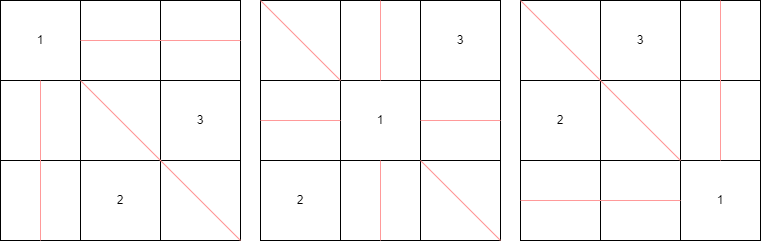}
    \caption{Diagonal Start Points}
    \label{fig:coloring5}
\end{figure}
\indent \indent In the second case, we will apply the same algorithm. There are 4 uncolored points after the first step, which can be represented by a 2 by 2 grid based on the rows and columns they share. This square diagram will always be accurate, since if the point is part of two combinatorial lines, then those lines will be the row and column. Thus, out of each row and column that the starting point is not part of, one point will already be colored and two will still be uncolored. Since we cannot color two points along the same row or column, we have only two possibilities, which are the two diagonals of the grid. We now have the complete list of colorings, shown in Figure \ref{fig:coloring6} and contains many repeats. All of which are either a part of $S_1$ or $S_2$.
\end{proof}
\begin{figure}[h]
    \centering
    \includegraphics[width=0.75\textwidth]{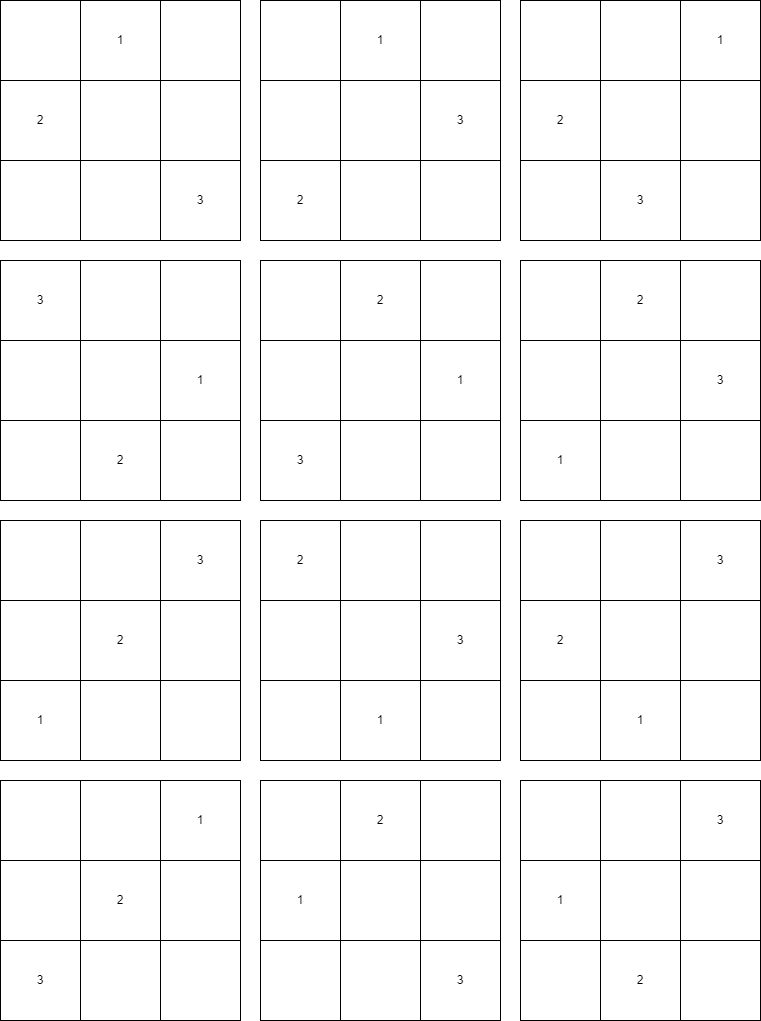}
    \caption{Non-Diagonal Start Points}
    \label{fig:coloring6}
\end{figure}
\begin{lemma}
Consider any RF 10-coloring of $[3]^3$. Then for any partition of $[3]^3$ into 3 layers of $[3]^2$, each layer is a minimal 4-coloring from $S_2$.
\end{lemma}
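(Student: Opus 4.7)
The plan is a proof by contradiction: assume some axis-aligned partition of $[3]^3$ has a layer, say $L_1$, that is in $S_1$, and force a rainbow combinatorial line. By Lemma 4.2 the coloring is minimal with a single dominant color $c$ appearing $18$ times, and each pillar line $*x_2x_3$ contains exactly one distinctly-colored point. Hence the nine distinct points of $[3]^3$ form a transversal structure: inside each layer, the three distinct points sit at positions $(x_2, x_3)$ forming a permutation of $[3]$ (one per row and one per column of the $3 \times 3$ grid).

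Since $L_1 \in S_1$, this permutation is fixed-point-free, i.e., a $3$-cycle. After swapping the two inner coordinates $x_2 \leftrightarrow x_3$ if needed (a combinatorial-line-preserving symmetry that exchanges the two $3$-cycles on $[3]$), I may assume this cycle is $(1\,2\,3)$, so the distinct points of $L_1$ are the words $112,\ 123,\ 131$. The remaining six grid positions $\{(1,1),(1,3),(2,1),(2,2),(3,2),(3,3)\}$ must then be split between $L_2$ and $L_3$ into two disjoint transversals of the $3 \times 3$ grid. A short row-by-row enumeration shows exactly two such transversals live inside these six cells --- the layer diagonal $T_1 = \{(1,1),(2,2),(3,3)\}$ and the complementary $3$-cycle $T_2 = \{(1,3),(2,1),(3,2)\}$ --- and since $T_1 \cup T_2$ covers the six cells disjointly, $L_2$ and $L_3$ must realize exactly $T_1$ and $T_2$, in one of the two orders.

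Whichever of $L_2, L_3$ takes the diagonal transversal $T_1$, call it $L_j$, then has its three distinct points at $j11,\ j22,\ j33$ --- precisely the three points of the combinatorial line $j**$ in $[3]^3$. These three points carry three pairwise distinct nondominant colors, producing a rainbow line and contradicting the RF hypothesis. Hence no layer is in $S_1$, and by Lemma 4.3 every layer is in $S_2$. The same reasoning applies along the $x_2$- and $x_3$-partitions by the symmetry among coordinates. The main obstacle, and really the heart of the argument, is the transversal-enumeration step, but in the restricted $3 \times 3$ setting with only six usable cells it collapses to a short case check on row choices.
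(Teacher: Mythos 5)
Your proof is correct, but it takes a genuinely different route from the paper's. The paper argues at the level of set membership: it first rules out mixing (a layer from $S_1$ together with a layer from $S_2$ forces a rainbow pillar line, since any derangement pattern and any one-fixed-point pattern occupy a common cell of the $3\times 3$ grid), and then rules out all three layers lying in $S_1$, because $|S_1|=2$ forces two layers to carry the same pattern and hence two nondominant points on one pillar line. You instead exploit the transversal structure: since each pillar line carries exactly one nondominant point, the three layers' nondominant cells partition the $3\times 3$ grid into three disjoint transversals, and if one layer's transversal is a $3$-cycle, the only transversals available in the remaining six cells are the main diagonal and the complementary $3$-cycle; the layer receiving the diagonal then has its three distinctly colored points on the combinatorial line $j**$, which is rainbow. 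So your contradiction comes from a line inside a layer rather than from pillar lines, and it requires no case analysis over which set the other two layers belong to (indeed it uses Lemma 4.3 only to conclude membership in $S_2$ at the very end, relying otherwise on Lemmas 4.1 and 4.2), whereas the paper's argument is shorter to state but leans on inspection of the explicit patterns in Figures 3 and 4. Your identification of $S_1$ with the fixed-point-free patterns matches the paper (it is forced by $|S_1|=2$ and the count $3!=6$ in Lemma 4.5), your coordinate-swap reduction is a legitimate line-preserving symmetry, and your enumeration of the two admissible transversals checks out, so the argument is sound as written.
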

\begin{proof}
We know that every layer is either from $S_1$ or $S_2$. Notice that we cannot mix colorings between sets; if a layer is in $S_1$, then another layer being in $S_2$ will always result in rainbow pillar line. But if we have every layer being a coloring from $S_1$, which contains two elements, then atleast two of the layers have the same coloring. This coloring will always contain a rainbow pillar line since two distinct nondominant colors will appear on the same combinatorial line, contradicting the RF assumption. 
\end{proof}
\begin{lemma}
There are two RF 10-colorings of $[3]^3$.
\end{lemma}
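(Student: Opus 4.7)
The plan is to use Lemmas 4.1--4.4 to reduce to at most six candidate configurations, then eliminate four of them by checking the combinatorial lines of $[3]^3$ that span multiple layers. The key observation, from Lemmas 4.1 and 4.2, is that the nine non-dominant colors are pairwise distinct and the dominant color $c$ is shared across all three layers, so every combinatorial line of $[3]^3$ must contain at most one non-dominant point (otherwise two distinctly colored non-dominant points together with a third point would force the line to be rainbow).

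First I would observe that the three patterns of non-dominant positions in $S_2$ are pairwise disjoint and partition $[3]^2$: each is a ``permutation matrix'' $\{(a,\sigma_i(a)) : a \in [3]\}$ for one of the three non-identity transpositions $\sigma_i \in S_3$, and together the three cover every cell of $[3]^2$ exactly once. Combined with Lemma 4.4 and the pillar-line constraint, this forces each of $L_1, L_2, L_3$ to use a distinct $S_2$-pattern, so up to the irrelevant relabeling of the nine non-dominant colors, every candidate coloring is specified by a permutation $\pi \in S_3$ assigning patterns to layers, yielding at most $3!=6$ configurations. Next I would cut these down using the spanning lines $**x_3$, $*x_2*$, and $***$. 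The condition that each line $**x_3$ contains at most one non-dominant point translates to ``$k \mapsto \sigma_{\pi(k)}(k)$ is a bijection on $[3]$,'' which forces $\pi$ to be a transposition and eliminates three of the six configurations. The single line $***$ visits the diagonal position $(k,k)$ of each layer, and its rainbow-free condition becomes ``$f \circ \pi$ has at most one fixed point,'' where $f(i)$ denotes the unique fixed point of $\sigma_i$; since $f$ is itself a transposition, exactly one of the three remaining transposition choices of $\pi$ (namely $\pi = f$) makes $f \circ \pi$ the identity and is excluded, leaving exactly two admissible configurations.

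Finally I would verify that the two surviving configurations really are rainbow-free by a short check of the $*x_2*$ lines, and observe that the in-layer lines $x_1**$, $i*x_3$, and $ix_2*$ are automatic since each $S_2$-pattern is rainbow-free within its own layer by Lemma 4.3. The main obstacle is the reformulation used in the middle paragraph: expressing the spanning-line conditions as bijection and fixed-point conditions on $\pi$ collapses what would otherwise be an enumeration of six configurations against roughly twenty spanning lines into a one-line permutation check, and without this reformulation the argument becomes tractable but tedious.
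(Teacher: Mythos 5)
Your overall architecture is the same as the paper's: use Lemmas 4.2--4.4 to see that a RF $10$-coloring is determined (up to naming the nine singleton colors) by an assignment of the three $S_2$-patterns to the three layers, note that the three patterns partition $[3]^2$ so the pillar lines are automatically safe and only the seven remaining spanning lines $**x_3$, $*x_2*$, $***$ need checking, and then eliminate four of the $3!=6$ assignments. Your preliminary reductions (RF $\Leftrightarrow$ every line has at most one non-dominant point; distinct patterns on distinct layers; in-layer lines automatic by Lemma 4.3) are all correct, and the final count of two survivors is correct.

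The gap is in the middle step, where the parity and fixed-point claims are stated as facts but actually depend on an indexing of the patterns that you never fix. You write that the condition ``$k\mapsto\sigma_{\pi(k)}(k)$ is a bijection'' forces $\pi$ to be a transposition, and later that ``$f$ is itself a transposition.'' Neither follows from your setup: $f$ is just the bijection induced by however you happen to label the three patterns $\sigma_1,\sigma_2,\sigma_3$, and under the most natural labeling (where $\sigma_i$ is the transposition fixing $i$, so $f=\mathrm{id}$) the map $k\mapsto\sigma_{\pi(k)}(k)$ is a bijection precisely when $\pi$ is \emph{even} -- it equals the identity when $\pi=\mathrm{id}$ -- and it fails exactly for the three transpositions, the opposite of what you assert; the line $***$ then eliminates $\pi=\mathrm{id}$ rather than ``$\pi=f$.'' The labeling-free statement you want is: setting $\tau=f\circ\pi$ (so $\tau(i)$ is the diagonal cell of the pattern on layer $i$), the $**x_3$ and $*x_2*$ lines force $\tau$ to be even, and $***$ forces $\tau$ to have at most one fixed point, hence $\tau$ is a $3$-cycle; this leaves exactly two admissible $\pi$ for any labeling. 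Your version happens to give the right count because the number of survivors is labeling-independent, but as written the elimination step is unjustified (and false for some labelings), so you should either fix a labeling explicitly and redo the parity statement for it, or phrase the argument in terms of $\tau$ as above. With that repair, your proof is correct and is essentially the paper's case check, organized more compactly by permutation parity.
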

\begin{proof}
Note that the total number of combinations of layers from $S_2$ in $[3]^3$ would be $3! = 6$. Stacking these in any arrangement would never result in rainbow pillar lines, so we must look at diagonals. We will order the layers the same way as in Lemma 3.2, i.e. $L_i$ refers to all points of the form $ix_2x_3$. Suppose $L_1$ is $s_1$. Consider the combinatorial line defined by $**1$. This line must be not be rainbow, thus the second layer cannot be $s_2$. Then the second layer must be $s_3$. But then the line $***$ would be rainbow. Thus, $L_1$ cannot be $s_1$. Now let $L_1$ be $s_2$. Then, $L_2$ cannot be $s_1$, or else the lines $*3*$ and $**3$ will be rainbow. Then, $L_2$ must be $s_3$, and verifying the 10 combinatorial lines that are not pillar lines and span the layers shows that this coloring would be rainbow free (since $L_3$ is forced to be $s_1$). Now let $L_1$ be $s_3$. Then, $L_2$ cannot be $s_2$, or else we would have the lines $*2*$ and $**2$ will be rainbow. Then, $L_2$ must be $s_1$, and we can once again easily verify this coloring is rainbow free. Thus, out of the 6 possible combinations of layers, we only have two rainbow free colorings. The full list of colorings is shown in the below. We will denote the coloring on the top as "Pattern 1" and the coloring on the bottom as "Pattern 2." In Figure 8, you can view the remaining 4 combinations that do not work. The rainbow combinatorial line will be highlighted in red, with each row representing a separate 10 coloring of $[3]^3$.
\begin{figure}[h]
    \centering
    \includegraphics[width=0.75\textwidth]{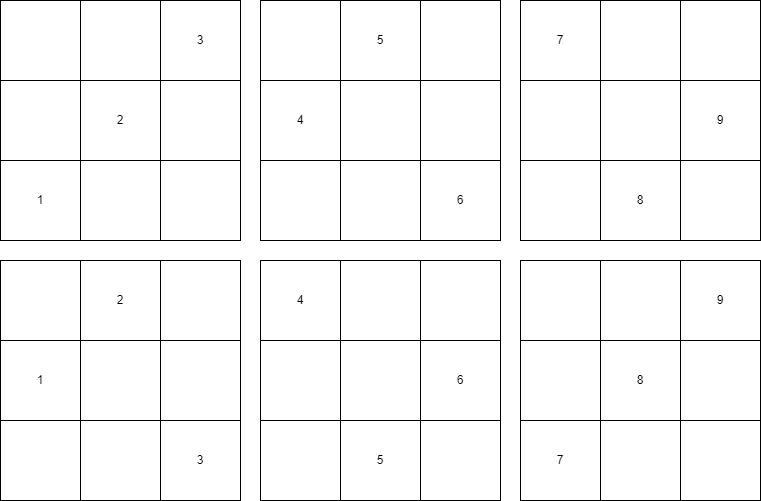}
    \caption{Two RF 10-colorings of $[3]^3$ }
    \label{fig:coloring7}
\end{figure}
\begin{figure}[h]
    \centering
    \includegraphics[width=0.75\textwidth]{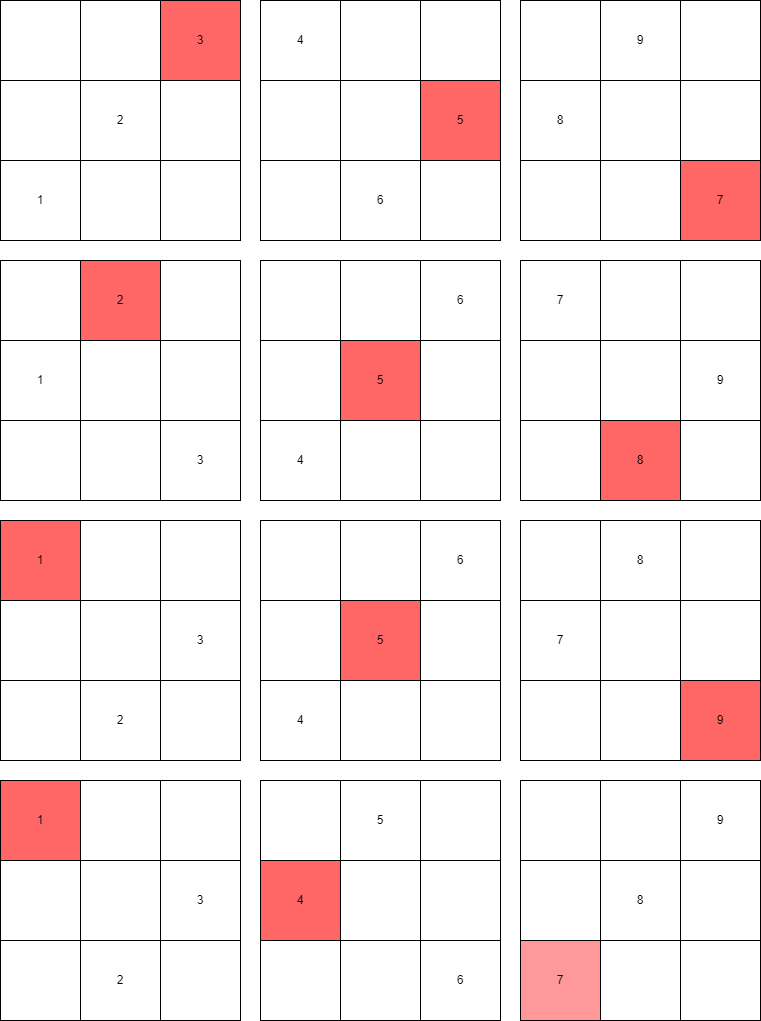}
    \caption{4 non-RF 10-colorings of $[3]^3$ }
    \label{fig:coloring15}
\end{figure}
\end{proof}
Now we know that only two RF 10-colorings of $[3]^3$ exist, we can prove properties about 27-colorings of $[3]^4$. We specifically choose 27 since it is easy to show that there are two layers that must contain 10 colors and be rainbow free, and plugging that value into Corollary 2.5 reduces the upper bound for all n $>$ 4. The main goal is to prove that a RF 27-coloring must have certain properties if it exists, then show that these properties are in contradiction with each other and the assumption that the coloring is rainbow free.
\begin{lemma}
For any RF 27-coloring of $[3]^4$, each layer can contain at most 9 distinct colors.
\end{lemma}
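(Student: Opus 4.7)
The plan is a proof by contradiction. Since $ah(3,3)=11$, any rainbow-free coloring of $[3]^3$ uses at most $10$ distinct colors; each layer of $[3]^4$ is a copy of $[3]^3$, so in any RF $27$-coloring every layer already uses at most $10$ colors. The task therefore reduces to ruling out the case that some layer uses exactly $10$ colors.

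Suppose, toward contradiction, that some layer $L_1$ uses exactly $10$ distinct colors. Then the restriction of the coloring to $L_1$ is a rainbow-free $10$-coloring of $[3]^3$, so by Lemma 4.2 it is minimal, with a dominant color $c$ occurring at $18$ points and nine other colors $u_1,\dots,u_9$ each occurring exactly once. Moreover, by Lemma 4.5, $L_1$ is either Pattern 1 or Pattern 2, so the nine positions carrying the unique colors $u_1,\dots,u_9$ sit in an explicit configuration inside $L_1$. Since only $10$ of the $27$ colors appear in $L_1$, the remaining $17$ colors must all appear in $L_2\cup L_3$, and combined with $|L_2|,|L_3|\le 10$ this forces $|L_2|+|L_3|\ge 17$ and leaves very limited room for color sharing between $L_2$ and $L_3$.

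I would then analyze the rainbow-free condition on combinatorial lines that cross all three layers. The simplest such lines are the $27$ pillar lines $*x_2x_3x_4$. For a pillar line whose $L_1$-point carries a unique color $u_i$, the RF condition forces either (i) $u_i$ to reappear at the corresponding position in $L_2$ or $L_3$, or (ii) the $L_2$- and $L_3$-colors at that position to coincide. For a pillar line whose $L_1$-point is $c$-colored, RF requires either $c$ to appear at the corresponding position in $L_2$ or $L_3$, or again a match between the $L_2$- and $L_3$-colors. I would case-split on how many of $u_1,\dots,u_9$ actually leak into $L_2\cup L_3$. In the cleanest case, where none do, all nine ``unique pillars'' force pointwise matches between $L_2$ and $L_3$; combined with the $18$ pillar-line constraints at the $c$-positions and the inequality $|L_2\cup L_3|\ge 17$, a direct count on $|L_2\cap L_3|$ gives a contradiction.

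The main obstacle is the mixed case in which a small positive number of the $u_i$ do appear in $L_2\cup L_3$: here the pillar-line constraints alone are not tight enough. To close this case I would supplement them with lines whose defining words contain two or more asterisks, so that they run diagonally across all three layers; using the fully explicit placement of the nine unique points in Pattern 1 (resp.\ Pattern 2), together with Lemma 4.1 applied to $L_2$ or $L_3$ whenever one of them has $10$ colors itself, one can locate a specific three-point combinatorial line all of whose colors are pairwise distinct, producing the desired rainbow line and the required contradiction.
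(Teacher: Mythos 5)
Your proposal starts on the same track as the paper (contradiction, the bound $ah(3,3)=11$ to cap each layer at $10$ colors, pillar lines, and a count of how the $17$ colors absent from $L_1$ must be distributed over $L_2\cup L_3$), but it does not actually finish. The ``mixed case'' you yourself flag --- where a few of $L_1$'s nine unique colors reappear on $L_2\cup L_3$ --- is precisely where the argument stops: you say one ``can locate a specific three-point combinatorial line'' using multi-asterisk lines and the Pattern 1 / Pattern 2 placement, but no such line is exhibited and no argument is given that one must exist. As written this is a plan with its hardest case left open, i.e.\ a genuine gap, not a proof.

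The structural difference from the paper is that you are analyzing the wrong layer. The paper's proof of this lemma never invokes the minimality or pattern structure of $L_1$ (Lemmas 4.2 and 4.5); it pushes your own counting observation one step further. Since each of the $17$ colors missing from $L_1$ must appear on $L_2$ or $L_3$, and each of those layers is rainbow-free and hence carries at most $10$ colors, at most $3$ of the $17$ can be shared between $L_2$ and $L_3$; consequently \emph{each} of $L_2$ and $L_3$ contains at least $7$ colors appearing on no other layer. The paper then concludes (tersely) that a pillar line meeting such exclusively colored points is rainbow, because a pillar line containing two points whose colors occur only on their own layers can have no repeated color. Exploiting this symmetric abundance of layer-exclusive colors on $L_2$ and $L_3$, rather than tracking which of $L_1$'s unique colors ``leak,'' is what removes the case analysis your sketch could not close; if you prefer to keep your route through Lemmas 4.2 and 4.5, you still owe a concrete construction or existence argument for the rainbow line in the mixed case, which is exactly the content your proposal leaves unproved.
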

\begin{proof}
For the sake of contradiction, suppose we have a coloring where a layer contains 10 distinct colors. Since $ah(3,3) = 11$, we cannot have any more colors on that layer. The remaining 17 colors must be split among the last two layers, but this guarantees that a layer will have some number of distinct colors. Any pillar line containing one of those distinct colors will always be rainbow, since it contains two distinctly colored points. We arrive at a contradiction and conclude that a layer cannot contain 10 distinct colors.
\end{proof}
\begin{lemma}
For any RF 27-coloring of $[3]^4$, two layers can share at most 1 color with each other.
\end{lemma}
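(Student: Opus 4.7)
The plan is a direct union-bound argument leveraging Lemma 4.6. Fix any partition $[3]^4 = L_1 \cup L_2 \cup L_3$ into three disjoint copies of $[3]^3$ (obtained by fixing one coordinate), and let $C_i$ denote the set of colors that appear on $L_i$. Since the ambient coloring of $[3]^4$ is rainbow free, its restriction to each $L_i$ is a rainbow-free coloring of $[3]^3$, so Lemma 4.6 gives $|C_i| \leq 9$ for every $i \in [3]$.

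Next I would proceed by contradiction. Suppose two layers, without loss of generality $L_1$ and $L_2$, share at least two colors, i.e.\ $|C_1 \cap C_2| \geq 2$. Then I would bound the total color count via the straightforward union estimate
\begin{equation*}
27 = |C_1 \cup C_2 \cup C_3| \leq |C_1 \cup C_2| + |C_3| = |C_1| + |C_2| - |C_1 \cap C_2| + |C_3| \leq 9 + 9 - 2 + 9 = 25,
\end{equation*}
contradicting the fact that the coloring uses 27 distinct colors. Hence no two layers can share more than one color.

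There is no substantial obstacle here; the content of the lemma is essentially a counting consequence of Lemma 4.6. The only thing one must be careful about is to phrase the argument so that it applies to an \emph{arbitrary} partition into three layers of $[3]^3$, since the partition direction in $[3]^4$ was not fixed in the hypothesis. This is automatic because Lemma 4.6 itself was stated for an arbitrary partition of $[3]^4$, so the bound $|C_i| \leq 9$ propagates to every choice of partitioning coordinate.
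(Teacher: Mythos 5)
There is a genuine gap: your argument rests on reading Lemma 4.6 as the statement $|C_i| \leq 9$, where $C_i$ is the set of \emph{all} colors appearing on the layer $L_i$. That is not what the lemma asserts in this paper. Throughout (compare Lemma 4.1 and the proof of Lemma 4.6 itself), ``distinct colors'' of a layer means colors that appear \emph{only} on that layer, and Lemma 4.6 bounds the number of such layer-exclusive colors by 9. The bound on the total number of colors a layer can carry is 10, coming from $ah(3,3)=11$: the restriction of an RF coloring of $[3]^4$ to a layer is an RF coloring of $[3]^3$, hence uses at most 10 colors. Indeed, Lemma 4.8 and Theorem 4.9 explicitly work with layers that are RF 10-colorings, so ``at most 9 colors in total per layer'' cannot be the intended (or a true) statement; moreover, if it were true, then to reach 27 colors the three color sets would have to be pairwise disjoint, every pillar line would be rainbow, and the remainder of Section 4 would be unnecessary.

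With the correct bound $|C_i| \leq 10$, your union estimate only gives $27 \leq 10 + 10 - 2 + 10 = 28$, which is no contradiction, so pure counting cannot rule out two layers sharing exactly two colors. This is exactly why the paper's proof of this lemma is longer: it introduces $j$, the number of colors shared between $L_2$ and $L_3$, disposes of $j \geq 2$ by a capacity count similar in spirit to yours, but must then handle $j=1$ and $j=0$ by structural arguments --- forcing layers to be full RF 10-colorings, invoking Lemma 4.5 that there are only two such colorings (the two ``patterns''), and producing rainbow pillar lines by tracking where the dominant and nondominant colors sit. Your proposal would need arguments of that kind to close the cases the counting leaves open.
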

\begin{proof}
For contradiction, suppose we have two layers, $L_1$ and $L_2$ without loss of generality, that share two colors with each other. Let j be the number of colors shared between $L_3$ and $L_2$. The reason why we do not need to consider the number of colors shared between $L_1$ and $L_3$ is because if we can find a contradiction with $L_1$ and $L_3$ sharing no colors, we can easily find one when they do share colors (since the number of colors $L_1$ can hold is lessened). We will derive a contradiction based on j and the number of colors each layer can hold. Let $C_1, C_2, C_3$ define the maximum number of colors each layer can hold. Then, $|C_1| = 8$, $|C_2| = 8 - j$, and $|C_3| = 10 - j$. Suppose $j \geq 2$. Then, $|C_1| = 8, |C_2| \leq 6$, and $|C_3| \leq 8$. The maximum total numbers of colors all layers can hold is $C = 26 - 2j \leq 22$. Thus for all $j \geq 2$, we will have an "overflow" and have to put atleast one color onto a full layer, which guarantees 11 colors on a layer and thus a rainbow since $25 - j > C$. We now deal with the cases where $j = 0$ and $j = 1$. \\
Case 1: $j = 1$ \indent
If $j = 1$, then $|C_1| = 8, |C_2| = 7$, and $|C_3| = 9$, with $C = 24$. Since we have $25 - j = 24$ colors left, we must fill up every single layer. In other words, each layer contains 10 colors. But since each layer is also rainbow-free and there are only two RF 10-colorings of $[3]^3$, we will have two layers $L_i$ and $L_j$ with the same "pattern," where the nondominant colors are in the same position. $L_i$ and $L_j$ share at most two nondominant colors. Regardless, we can choose a pillar line in which the two points are not colored with the shared nondominant colors. Since the third layer cannot be the same pattern, or else we would automatically have some rainbow pillar line, it must be the other pattern. Then, the third point on our chosen pillar line is colored with the dominant color. If the dominant color is not a shared color, we have a rainbow. If the dominant color is shared, then either the pillar line will still be rainbow or another pillar line will be rainbow. The latter is true because if the dominant color is shared with another layer, it is either dominant or nondominant. If dominant, then the original pillar line would be rainbow, since the points on $L_i$ and $L_j$ are not colored with the dominant color. If nondominant on other layers, we can choose a line containing the dominant color and some point on $L_i$ or $L_j$ that is not colored with a shared nondominant color. We know this line exists because the dominant color can only appear once on $L_i$ or $L_j$ as a nondominant color, which means there are at most two out of nine pillar lines that do contain the dominant color. Either way, we have a rainbow pillar line.
\\
Case 2: $j = 0$ \indent
If $j = 0$, then $L_3$ contains only distinct colors. Since $|C_1| = 8, |C_2| = 8$, and $|C_3| = 9$, with $C = 25$, the only possible coloring is if we make every layer contain the maximum amount of colors. But since there exists distinctly colored points on $L_1$ and $L_2$, any pillar line containing those points will be rainbow, as the point on $L_3$ will also be distinct. Note that we only need one distinct point from $L_1$ or $L_2$, not a line containing distinctly colored points from both. \\
In all cases, we have rainbow lines and thus contradictions. Therefore, no two layers can share more than one color with each other.	
\end{proof}
\begin{lemma}
For any RF 27-coloring of $[3]^4$, all three layers must share exactly one common color. 
\end{lemma}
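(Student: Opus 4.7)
The plan is to proceed by contradiction: assume that no single color appears on all three layers, i.e., $|C_1 \cap C_2 \cap C_3| = 0$, and use Lemma 4.6 together with Lemma 4.7 to force $C_1, C_2, C_3$ to be pairwise disjoint; at that point any pillar line will immediately be rainbow. The whole argument is essentially one application of inclusion–exclusion followed by a one-line pillar observation.

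First I would apply inclusion–exclusion to $|C_1 \cup C_2 \cup C_3| = 27$, which holds because all 27 colors of the coloring must appear on some layer. Under the contradiction hypothesis the triple-intersection term vanishes, leaving $27 = \sum_i |C_i| - \sum_{i<j} |C_i \cap C_j|$. Lemma 4.6 gives $|C_i| \leq 9$, so $\sum_i |C_i| \leq 27$; substituting yields $\sum_{i<j} |C_i \cap C_j| \leq 0$. Hence every pairwise intersection is empty and each $|C_i|$ must equal $9$, so $C_1, C_2, C_3$ are pairwise disjoint and partition the 27 colors.

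To close, consider any combinatorial line of the form $*x_2 x_3 x_4$ with $(x_2,x_3,x_4) \in [3]^3$. Its three points sit one in each $L_i$, so their colors are drawn from the pairwise disjoint sets $C_1, C_2, C_3$ and are thus all distinct. So every such pillar line is rainbow, contradicting the RF assumption. Therefore $|C_1 \cap C_2 \cap C_3| \geq 1$, and combined with $|C_1 \cap C_2 \cap C_3| \leq |C_1 \cap C_2| \leq 1$ from Lemma 4.7, we get exactly one shared color.

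Given the strength of the preceding lemmas, there is no real obstacle; the key observation is that Lemma 4.6's bound $|C_i| \leq 9$ is exactly tight against three disjoint color sets summing to $27$, and pairwise disjointness is automatically fatal for every pillar line. If Lemma 4.6 had instead only given $|C_i| \leq 10$, one would have to case-split on the number of pairwise shared colors $s_2 \in \{0,1,2,3\}$ and invoke the minimal-coloring classification from Lemma 4.5 in the extremal $s_2 = 3$ case to conclude that the three layers' dominant colors must coincide.
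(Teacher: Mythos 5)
Your argument hinges on reading Lemma 4.6 as a bound $|C_i| \le 9$ on the \emph{total} number of colors appearing on a layer, but that is not what the lemma says in this paper's terminology: ``distinct colors'' means colors appearing \emph{only} on that layer (compare Lemma 4.1, where each layer of a RF 10-coloring of $[3]^3$ has 3 distinct colors plus the one shared color, accounting for $3\cdot 3+1=10$). The correct bound on the full palette of a layer is $|C_i| \le 10$, coming from $ah(3,3)=11$; indeed the paper's own proofs of Lemma 4.7 (Case 1) and Theorem 4.9 explicitly work with layers that are RF 10-colorings, which would be impossible if Lemma 4.6 bounded $|C_i|$ by 9. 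With the true bound $|C_i| \le 10$, your inclusion--exclusion under the hypothesis $|C_1 \cap C_2 \cap C_3| = 0$ gives only $\sum_{i<j} |C_i \cap C_j| \le 3$, which is perfectly consistent with Lemma 4.7 (each pairwise intersection of size at most 1) and produces no contradiction. So the proof collapses precisely at its key step.

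The scenario you defer to in your closing remark (the ``$|C_i| \le 10$'' case requiring a case split) is in fact the situation that must be handled, and it is the substance of the paper's proof: assuming, say, that $L_1$ shares a color $c_{12}$ with $L_2$ and $L_2$ shares a different color $c_{23}$ with $L_3$, one counts capacities ($|C_1|=9$, $|C_2|=8$, $|C_3|=9$ available slots against the remaining 25 colors) to force two of the layers to be full RF 10-colorings, and then invokes the classification of such colorings (Lemmas 4.2--4.5, the two ``patterns'') together with a pillar-line analysis and the Appendix A case check to exhibit a rainbow line. That work cannot be replaced by the counting step alone. Your observation that pairwise-disjoint palettes make every pillar line rainbow is correct, but it only disposes of the easy subcase in which no colors are shared at all, which the paper dismisses in its first sentence; the genuinely hard case is when colors are shared pairwise without a common color, and your proposal does not address it.
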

\begin{proof}
We know that the layers must share some colors, as if no layers share colors, any pillar line would be rainbow. Now for the sake of contradiction, suppose we have layers $L_1, L_2,$ and $L_3$ where $L_1$ shares the color $c_{12}$ with $L_2$ and $L_2$ shares another color $c_{23}$ with $L_3$. We then have $|C_1| = 9$, $|C_2| = 8$, and $|C_3| = 9$ with 25 colors left. We notice that even if $L_3$ and $L_1$ share a color, each $L_i$ must contain at least 8 distinct colors. Furthermore, two of the layers $L_i$ and $L_j$ will be RF 10-colorings, and as a result, they must be different patterns, or else we can find pillar lines with distinctly colored points on $L_i$ and $L_j$. There are two cases: $L_1$ or $L_3$ and $L_2$ are the RF 10-colorings, or $L_1$ and $L_3$ are the RF 10-colorings. \\
Case 1: $L_1$ or $L_3$ and $L_2$ are the RF 10-colorings. \indent Without loss of generality, suppose that $L_1$ is the RF 10-coloring. If $c_{12}$ is not dominant in at least one of the layers, we are essentially done. The shared color only appears once on $L_1$ or $L_2$ say at $p_1$. Then, there are 26 pillar lines that do not contain $p_1$, all of which contain two points on $L_1$ and $L_2$ that are different colors. Even supposing that $L_1$ and $L_3$ share a color, there will still be atleast 5 pillar lines to choose from (which may be a lot more if the shared color does not appear many times on $L_3$). If $c_{12}$ is dominant on both of the layers, we can once again consider the pillar lines. In this subcase, there are only 18 pillar lines with different colors on $L_1$ and $L_2$, and if these are all rainbow-free, we then have combinatorial lines that have to be rainbow on all 6 possible arrangements of patterns on and placements of $L_1$ and $L_2$, which can be found in Appendix A due to the large size and number of images. \\
Case 2: $L_1$ and $L_3$ are the RF 10-colorings. If $L_1$ and $L_3$ share a color, this becomes case 1 since $L_2$ must now also be a RF 10-coloring. We assume that $L_1$ and $L_2$ share no colors. But this is also a contradiction; any pillar line containing a distinct color on $L_2$ is rainbow, since the points on $L_1$ and $L_3$ cannot possibly be the same. \\
In both cases, we arrive at a contradiction to the assumption that this 27-coloring is RF. Thus, all three layers must share exactly one common color. 
\end{proof}
\begin{theorem}
$ah(3, 4) \leq 27$ and $ah(3, n) \leq 3^{n-1} - 2\cdot3^{n-4} + 2$ for all natural numbers n $>$ 4.
\end{theorem}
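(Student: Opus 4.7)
The plan is to combine Lemmas 4.6, 4.7, and 4.8 via inclusion--exclusion on the color sets of the three layers of $[3]^4$, yielding a direct contradiction to the existence of any RF 27-coloring, and then to obtain the bound for $n > 4$ by a single application of Corollary 2.5.

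First I would suppose, for contradiction, that an RF 27-coloring of $[3]^4$ exists, and fix an arbitrary partition into three layers $L_1, L_2, L_3$, each isomorphic to $[3]^3$. Let $C_i$ denote the set of colors appearing on $L_i$. Since every one of the $27$ colors must appear somewhere in $[3]^4$, we have $|C_1 \cup C_2 \cup C_3| = 27$. Lemma 4.6 gives $|C_i| \leq 9$. Lemma 4.8 says the three layers share exactly one common color $c$, so $|C_1 \cap C_2 \cap C_3| = 1$ and $c \in C_i \cap C_j$ for every pair $i \neq j$; combined with the upper bound $|C_i \cap C_j| \leq 1$ from Lemma 4.7, each pairwise intersection has size exactly $1$.

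Inclusion--exclusion then yields
\[
|C_1 \cup C_2 \cup C_3| \;=\; \sum_{i=1}^{3} |C_i| \;-\; \sum_{i<j} |C_i \cap C_j| \;+\; |C_1 \cap C_2 \cap C_3| \;\leq\; 27 - 3 + 1 \;=\; 25,
\]
contradicting the requirement that all $27$ distinct colors appear. Hence $ah(3,4) \leq 27$. For the bound for $n > 4$, I would substitute $k=3$, $m=4$, $a_m = 27$ into Corollary 2.5, giving
\[
ah(3,n) \;\leq\; \frac{(27-1)(3-1)-2}{3-1}\cdot 3^{n-4} + \frac{3+1}{3-1} \;=\; 25\cdot 3^{n-4} + 2 \;=\; 3^{n-1} - 2\cdot 3^{n-4} + 2,
\]
using that $3^{n-1} - 2\cdot 3^{n-4} = 3^{n-4}(27 - 2) = 25 \cdot 3^{n-4}$.

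The main obstacle is not in the final assembly, which is a one-line inclusion--exclusion argument followed by plugging into Corollary 2.5, but rather in the three structural lemmas (4.6--4.8) whose combination produces exactly the right arithmetic constraint on how colors can distribute across the three layers. In particular, the numerical gap of $2$ between the counts $27$ and $25$ is very tight: any loosening of the per-layer bound (Lemma 4.6) to $10$, or of the pairwise-sharing bound (Lemma 4.7) to $2$, would destroy the contradiction, which is why the structural lemmas had to be proved in the strong form given.
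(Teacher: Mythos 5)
Your inclusion--exclusion step rests on a misreading of Lemma 4.6. In this paper ``distinct colors'' on a layer means colors that appear \emph{only} on that layer (see the proofs of Lemmas 4.1 and 4.6, where a ``distinctly colored'' point is one whose color does not appear outside its layer). Lemma 4.6 therefore bounds the number of colors exclusive to a layer by $9$; it does not say $|C_i| \leq 9$ for the full color set of $L_i$. Indeed a layer can carry $10$ colors in total, since RF 10-colorings of $[3]^3$ exist ($ah(3,3)=11$), and the paper's own proof of this theorem shows that two of the three layers \emph{must} be RF 10-colorings. With the correct reading, the lemmas give: each pairwise intersection is exactly the one common color, each layer has at most $9$ exclusive colors, so the $26$ non-common colors distribute as $9+9+8$ across the layers --- which is perfectly consistent, and your count becomes $|C_1|+|C_2|+|C_3| = 29 \leq 10+10+9$, not a contradiction. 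So the claimed ``$27 \leq 25$'' collapse does not occur; a sanity check is that if the three lemmas alone forced a contradiction by counting, the paper's classification of RF 10-colorings (Lemma 4.5) and the entire Appendix A would be superfluous.

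The actual proof needs the further structural work you skipped: from the $9+9+8$ distribution, two layers are forced to be RF 10-colorings of $[3]^3$, hence (by Lemma 4.5) each is one of only two patterns, and they must be different patterns; a pillar-line count using the $8$ exclusive colors on the third layer forces the common color to be dominant on both 10-colored layers; and finally a case analysis over the six arrangements of Patterns 1 and 2 (Appendix A) exhibits, in each case, a point that cannot receive any color without creating a rainbow combinatorial line. Your final step is fine: plugging $a_4 = 27$ into Corollary 2.5 does give $ah(3,n) \leq 25\cdot 3^{n-4} + 2 = 3^{n-1} - 2\cdot 3^{n-4} + 2$ for $n > 4$, exactly as in the paper. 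But as written, the contradiction for $ah(3,4) \leq 27$ is not established.
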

\begin{proof}
For contradiction, assume $\exists$ a RF 27-coloring of $[3]^4$. Then, all layers must share a color and $|C_1| = |C_2| = |C_3| = 9$. The remaining 26 colors will be spread among the three layers, guaranteeing two RF 10-colorings, say on $L_1$ and $L_2$. Once again, these colorings must have different patterns. If the shared color is not dominant on $L_1$ or $L_2$, we have atleast 26 pillar lines containing differently colored points from $L_1$ and $L_2$. Since $L_3$ has 8 distinct colors, we know that there must be at least 7 rainbow pillar lines. Thus, the shared color must be dominant on $L_1$ and $L_2$. We can now refer to Appendix A to show that there must always be a rainbow combinatorial line, which is a contradiction. Thus, there does not exist a RF 27-coloring of $[3]^4$. In other words, $ah(3, 4) \leq 27$ and applying Corollary 2.5 gives us $ah(3, n) \leq 3^{n-1} - 2\cdot3^{n-4} + 2$ for all natural numbers n $>$ 4.
\end{proof}
Since we have a RF 23-coloring of $[3]^4$, shown in Figure \ref{fig:coloring14}, the following corollary is true:
\begin{corollary}
$24 \leq ah(3, 4) \leq 27$.
\end{corollary}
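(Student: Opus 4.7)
The statement splits into two inequalities. The upper bound $ah(3,4) \leq 27$ is precisely Theorem 4.8 and requires no further argument. The lower bound $24 \leq ah(3,4)$ is, by the definition of the anti-Hales Jewett number, equivalent to the existence of a rainbow-free $23$-coloring of $[3]^4$, and this is the only new content to be established.

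My plan is to construct such a coloring explicitly. I would partition $[3]^4$ into three layers $L_1, L_2, L_3$ isomorphic to $[3]^3$ along the first coordinate and design each layer in a coordinated fashion. The natural template, suggested by Lemmas 4.2 and 4.5, is to make each layer a minimal-style coloring with a common dominant color $c$ and non-dominant positions patterned on the RF $10$-colorings of $[3]^3$, possibly trimmed so that fewer than nine distinct non-dominant colors appear on a given layer, and with some non-dominant colors shared between layers so that the total palette comes out to exactly $23$. Lines whose wildcard stays inside one layer are then automatically non-rainbow, since each layer's coloring is designed to be rainbow-free on $[3]^3$.

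The remaining lines split into two families: pillar lines whose wildcard is in the first coordinate, and diagonal lines whose wildcard lies in some other coordinate but that still cross all three layers in the induced partition. For each such line the three points sit one in each layer, and the requirement is that at least two of the three colors agree --- either both coincidentally colored $c$, or both assigned the same shared non-dominant color. The main obstacle is the combinatorial bookkeeping of arranging the three layer-patterns so that this repetition condition holds simultaneously for every cross-layer line while the palette stays at a full $23$ colors; this is where identifications between non-dominant classes of different layers must be chosen carefully, since too aggressive a sharing drops the count below $23$ and too timid a sharing leaves a rainbow diagonal. I expect this to be a finite but fiddly search, and the cleanest way to record its outcome is to display the resulting coloring, as in Figure~\ref{fig:coloring14}, and verify directly that no combinatorial line of $[3]^4$ is rainbow on it.
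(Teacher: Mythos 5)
Your reading of the statement is right, and your overall strategy is exactly the paper's: the upper bound is Theorem 4.8, and the lower bound comes from exhibiting a rainbow-free $23$-coloring of $[3]^4$ (together with the standard observation that merging color classes preserves rainbow-freeness, so such a coloring forces $ah(3,4) \geq 24$). The paper's entire ``proof'' of the corollary is the explicit coloring displayed in Figure~\ref{fig:coloring14}.

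The gap is that you never actually produce that coloring. Everything after your first paragraph is a construction \emph{plan} --- layer $[3]^4$ along the first coordinate, use minimal-style layers with a common dominant color, trim and identify nondominant classes until exactly $23$ colors remain, then check pillar and diagonal cross-layer lines --- followed by the admission that this is ``a finite but fiddly search'' whose outcome would be recorded in a figure. But the explicit coloring is the \emph{only} nontrivial content of the lower bound; a template plus the expectation that a search will succeed does not certify that a rainbow-free $23$-coloring exists, and nothing in your argument rules out the possibility that every coloring fitting your template (or indeed every $23$-coloring) contains a rainbow line. Note also that it is not obvious your specific template can be completed: the constraints you list (every cross-layer line must repeat a color, yet sharing must be sparse enough to keep $23$ distinct colors) are exactly the ones that the paper's Lemmas 4.6--4.8 show become unsatisfiable at $27$ colors, so the feasibility at $23$ genuinely requires a witness. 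To finish, you must write down a concrete coloring of all $81$ points and verify (by the layer-by-layer plus pillar/diagonal case split you describe, or by exhaustive check) that no combinatorial line is rainbow; that witness is what the paper supplies in Figure~\ref{fig:coloring14} and what your proposal is missing.
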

\begin{figure}[h]
    \centering
    \includegraphics[width=0.75\textwidth]{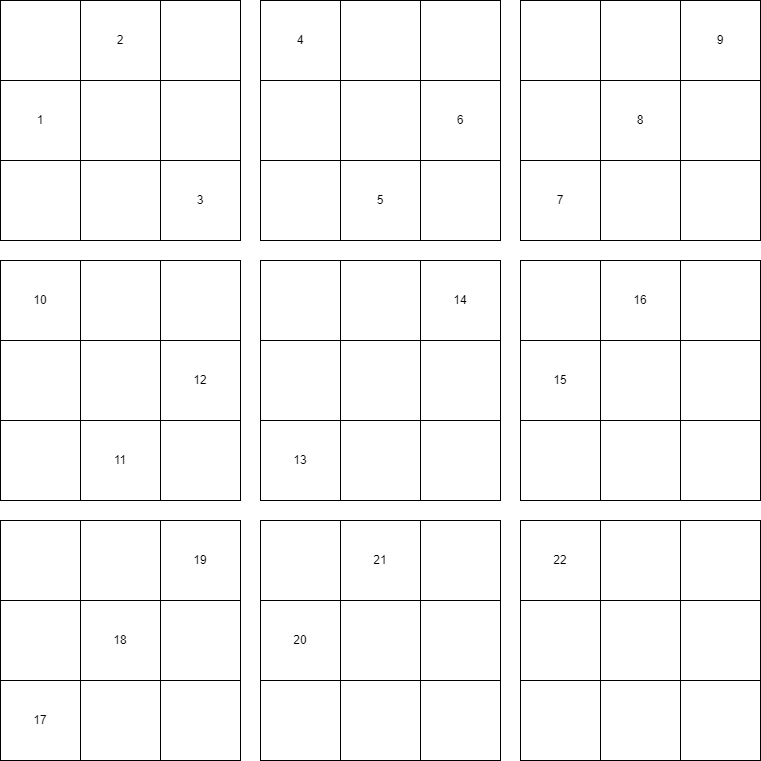}
    \caption{RF 23-coloring of $[3]^4$}
    \label{fig:coloring14}
\end{figure}
If $ah(3,4)$ is less than 27, we can obtain an even better general bound. However, using similar methods to achieve better bounds on $ah(3,4)$ may not be fruitful, as the key element to show that $ah(3,4) \leq 27$ is being able to force two layers to contain exactly 10 colors, which is extremely hard for numbers less than 27.

\section{Future Areas of Inquiry}
Since our upper bound is based on recursion applied to the anti-Hales Jewett Number for dimension 1, as we find higher dimensional colorings that are not equal to the bound, we can once again improve the upper bound using recursion. However, it may be much more enlightening to try to prove an upper bound also in terms of $(k-1)^n$ or $(k-1)^{n+1}$ to get some asymptotic bound. Furthermore, the probabilistic method could most likely be applied to help find better lower and upper bounds. \\
In a similar vein to anti-Ramsey Theory, we may also want to consider k-rainbow combinatorial lines, where a single color may appear up to k many times. This may be more helpful and interesting for larger lengths of the hypercube.\\
Questions about balanced colorings of the hypercube have already been answered by Amanda Montejano in \cite{montejano_rainbow_2024}, and they also provide some further directions related to balanced colorings and the balanced upper chromatic number of the hypercube.
\section{Acknowledgements}
I would like to thank Peter Johnson, Wenshi Zhao, who provided the construction of the nonrecursive lower bound for $k = 3$, and Joseph Briggs, who brought this problem to my attention.
\bibliographystyle{plain}
\bibliography{refs}

\newpage
\appendix
\section{The 6 Arrangements of Patterns 1 and 2 in $[3]^4$}
The colorings will be arranged with each layer of $[3]^3$ as a horizonal set of three $[3]^2$. Unlike in some of the proofs, these layers will be ordered, as in $L_1$ refers to every word that starts with the letter "1." Furthermore, each column is also arranged in ascending order from left to right; the words in the leftmost column are comprised of every word that has "1" as the second letter. \\
Each color's first digit represents which layer it is on and the second digit signifies the number of distinct colors. The uncolored portions will be arbitrary, but as explained in the proof, these sections on the layers with patterns will be the dominant color in this case. \\
From these figures, you can see that there is always atleast one point/word where coloring it with any color will cause a rainbow and contradict the assumption. These points are colored purple, and the two combinatorial lines it is a part of will be colored red and blue respectively.
\begin{figure}[h]
    \centering
    \includegraphics[width=0.75\textwidth]{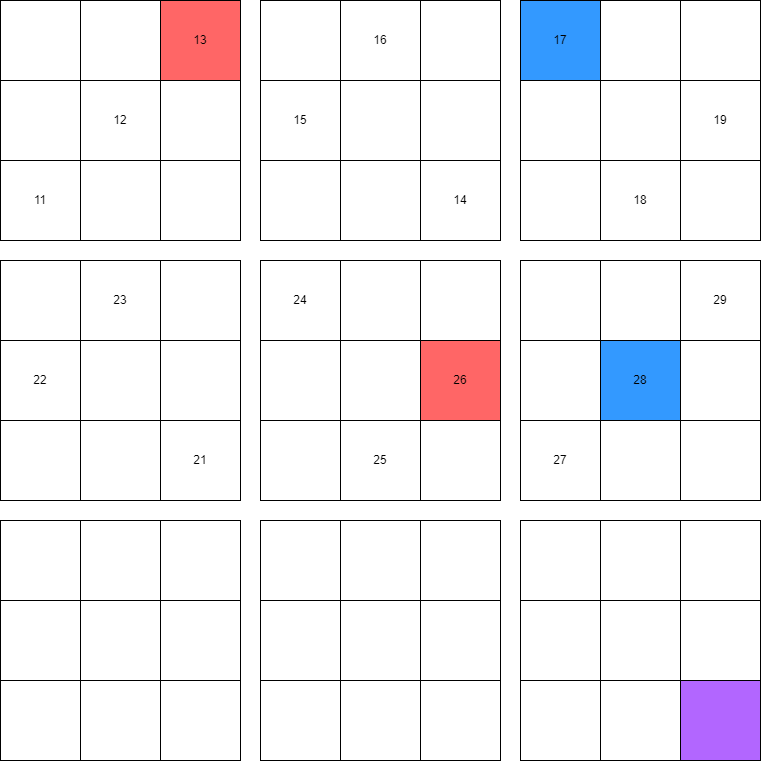}
    \caption{Pattern 1 on $L_1$ and Pattern 2 on $L_2$}
    \label{fig:coloring8}
\end{figure}
\begin{figure}[h]
    \centering
    \includegraphics[width=0.75\textwidth]{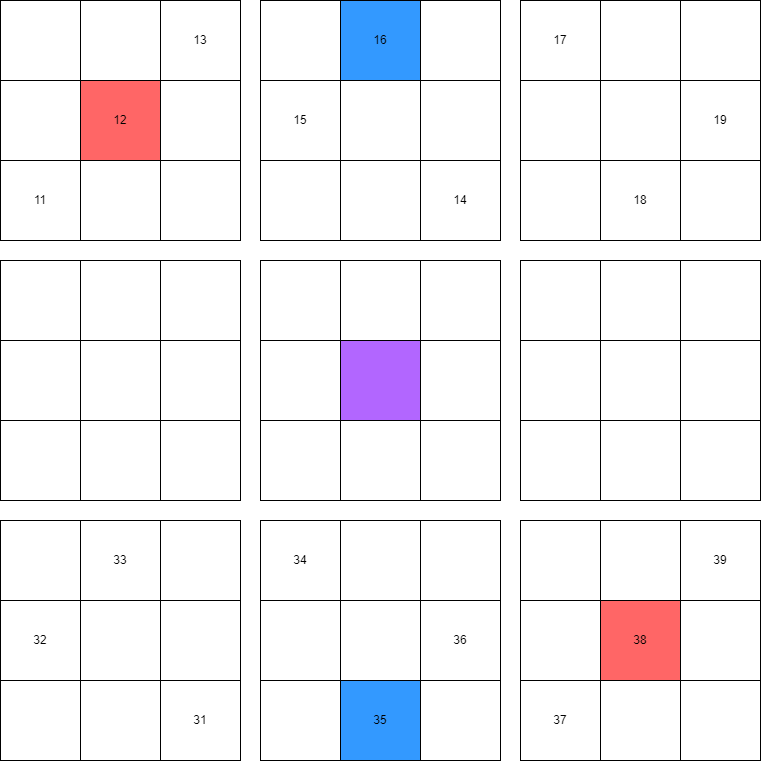}
    \caption{Pattern 1 on $L_1$ and Pattern 2 on $L_3$}
    \label{fig:coloring9}
\end{figure}

\begin{figure}[h]
    \centering
    \includegraphics[width=0.75\textwidth]{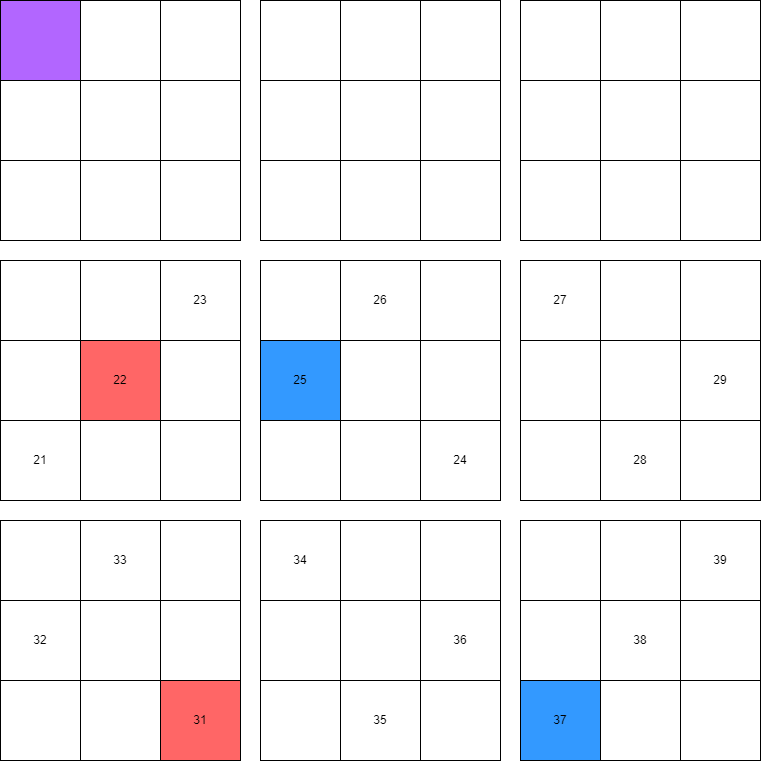}
    \caption{Pattern 1 on $L_2$ and Pattern 2 on $L_3$}
    \label{fig:coloring10}
\end{figure}
\begin{figure}[h]
    \centering
    \includegraphics[width=0.75\textwidth]{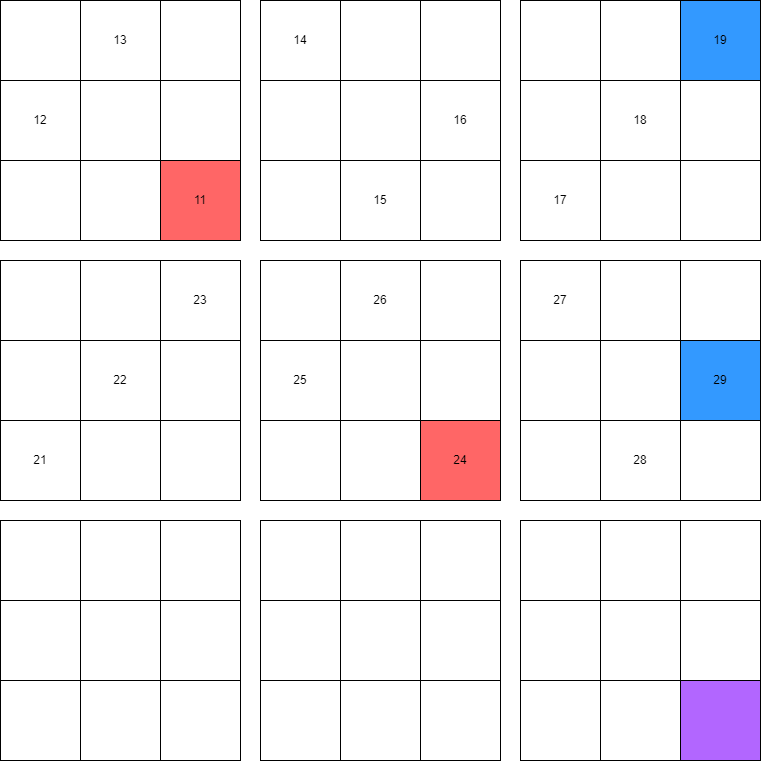}
    \caption{Pattern 2 on $L_1$ and Pattern 1 on $L_2$}
    \label{fig:coloring11}
\end{figure}
\begin{figure}[h]
    \centering
    \includegraphics[width=0.75\textwidth]{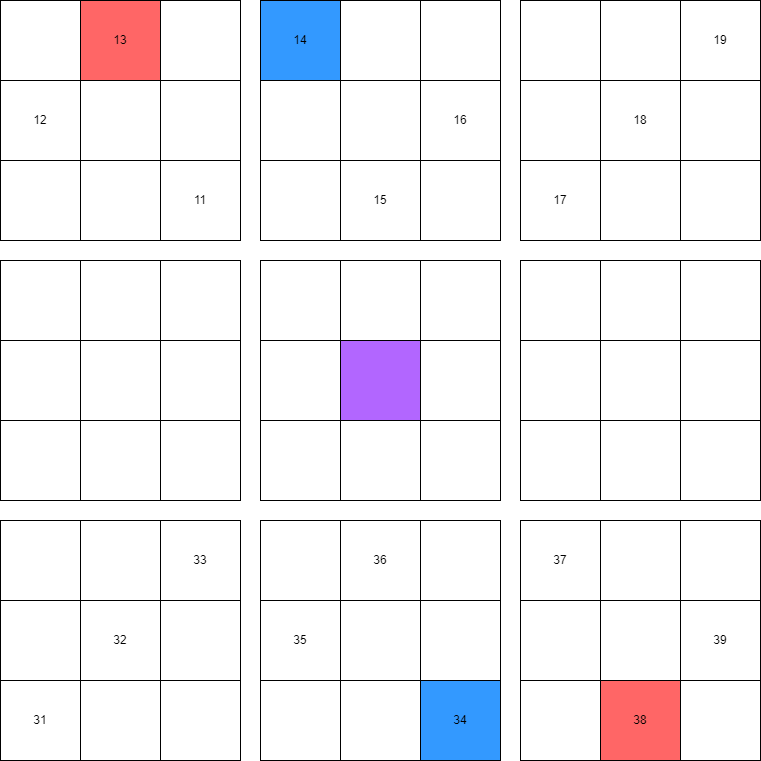}
    \caption{Pattern 2 on $L_1$ and Pattern 1 on $L_3$}
    \label{fig:coloring12}
\end{figure}
\begin{figure}[h]
    \centering
    \includegraphics[width=0.75\textwidth]{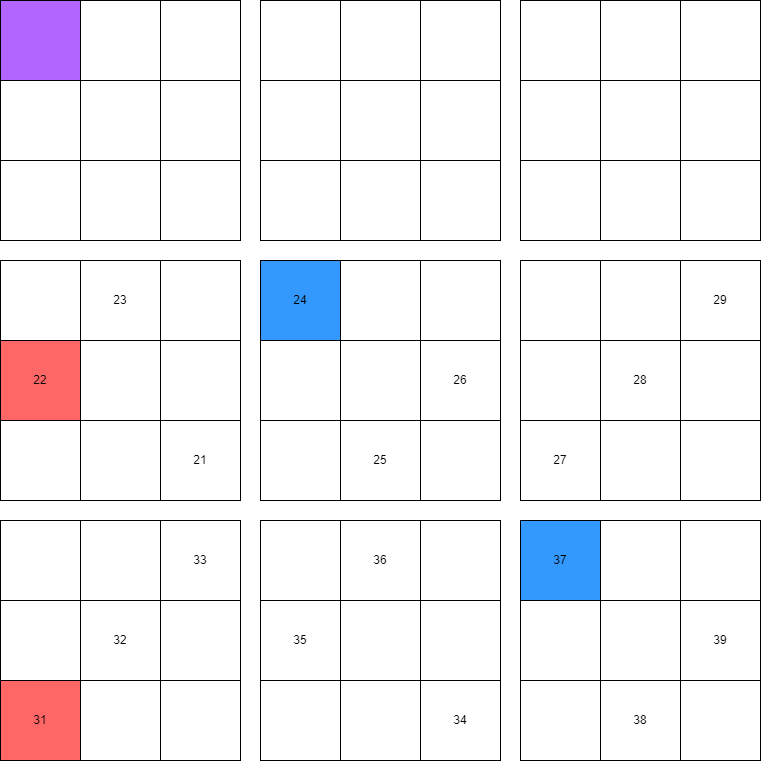}
    \caption{Pattern 2 on $L_2$ and Pattern 1 on $L_3$}
    \label{fig:coloring13}
\end{figure}

\end{document}